\documentclass[final,12pt]{elsarticle}
\usepackage{srcltx}
\usepackage{float}
\usepackage{eurosym}
\usepackage{mathtools}
\usepackage{amsmath}
\usepackage{cases}
\usepackage{amsfonts}
\usepackage{amssymb}
\usepackage{amsthm}
\usepackage{graphicx}
\usepackage{mathrsfs}
\usepackage{xcolor}
\usepackage{xpatch}
\usepackage{exscale}
\usepackage{latexsym}
\usepackage{tikz}
\usepackage{caption}
\usepackage{lineno}
\xpatchcmd{\MaketitleBox}{\hrule}{}{}{}
\xpatchcmd{\MaketitleBox}{\hrule}{}{}{}
\usepackage[colorlinks,plainpages=true,pdfpagelabels,hypertexnames=true,colorlinks=true,pdfstartview=FitV,linkcolor=blue,citecolor=red,urlcolor=black]{hyperref}
\PassOptionsToPackage{unicode}{hyperref}
\PassOptionsToPackage{naturalnames}{hyperref}
\usepackage{enumerate}
\usepackage[shortlabels]{enumitem}
\usepackage{bookmark}
\usepackage{wasysym}
\usepackage{esint}
\usepackage[ddmmyyyy]{datetime}
\usepackage[margin=2cm]{geometry}
\parskip = 0.00in
\headsep=0.0in
\makeatletter
\g@addto@macro\normalsize{%
  \setlength\abovedisplayskip{4pt}
  \setlength\belowdisplayskip{4pt}
  \setlength\abovedisplayshortskip{4pt}
  \setlength\belowdisplayshortskip{4pt}
}
\numberwithin{equation}{section}
\everymath{\displaystyle}
\usepackage[capitalize,nameinlink]{cleveref}
\crefname{section}{Section}{Sections}
\crefname{subsection}{Subsection}{Subsections}
\crefname{condition}{Condition}{Conditions}
\crefname{hypothesis}{Hypothesis}{Conditions}
\crefname{assumption}{Assumption}{Assumptions}
\crefname{lemma}{Lemma}{Lemmas}
\crefname{definition}{Definition}{Definitions}

\crefformat{equation}{\textup{#2(#1)#3}}
\crefrangeformat{equation}{\textup{#3(#1)#4--#5(#2)#6}}
\crefmultiformat{equation}{\textup{#2(#1)#3}}{ and \textup{#2(#1)#3}}
{, \textup{#2(#1)#3}}{, and \textup{#2(#1)#3}}
\crefrangemultiformat{equation}{\textup{#3(#1)#4--#5(#2)#6}}%
{ and \textup{#3(#1)#4--#5(#2)#6}}{, \textup{#3(#1)#4--#5(#2)#6}}%
{, and \textup{#3(#1)#4--#5(#2)#6}}

\Crefformat{equation}{#2Equation~\textup{(#1)}#3}
\Crefrangeformat{equation}{Equations~\textup{#3(#1)#4--#5(#2)#6}}
\Crefmultiformat{equation}{Equations~\textup{#2(#1)#3}}{ and \textup{#2(#1)#3}}
{, \textup{#2(#1)#3}}{, and \textup{#2(#1)#3}}
\Crefrangemultiformat{equation}{Equations~\textup{#3(#1)#4--#5(#2)#6}}%
{ and \textup{#3(#1)#4--#5(#2)#6}}{, \textup{#3(#1)#4--#5(#2)#6}}%
{, and \textup{#3(#1)#4--#5(#2)#6}}

\crefdefaultlabelformat{#2\textup{#1}#3}
\numberwithin{equation}{section}

\newtheorem{theorem} {Theorem}[section]

\newtheorem{lemma}{Lemma}[section]

\newtheorem{example}{Example}[section]
\newtheorem{counter example}{Counter Example}[section]
\newtheorem{remark} {Remark}[section]
\newtheorem{definition} {Definition}[section]





\def\CC{{\rm \kern.24em \vrule width.02em height1.4ex depth-.05ex \kern-.26emC}}

\def\TagOnRight

\def\AA{{it I} \hskip-3pt{\tt A}}

\def\QQ{\rlap {\raise 0.4ex \hbox{$\scriptscriptstyle |$}} {\hskip -0.1em Q}}


\makeatletter
\newcommand{\vo}{\vec{o}\@ifnextchar{^}{\,}{}}
\makeatother

\def\YYint#1#2#3{{\setbox0=\hbox{$#1{#2#3}{\iint}$}
    \vcenter{\hbox{$#2#3$}}\kern-.50\wd0}}


\def\XXint#1#2#3{{\setbox0=\hbox{$#1{#2#3}{\int}$}
    \vcenter{\hbox{$#2#3$}}\kern-.50\wd0}}

\makeatletter
\def\namedlabel#1#2{\begingroup
   \def\@currentlabel{#2}%
   \label{#1}\endgroup
}
\makeatother
\makeatletter
\newcommand{\rmh}[1]{\mathpalette{\raisem@th{#1}}}
\newcommand{\raisem@th}[3]{\hspace*{-1pt}\raisebox{#1}{$#2#3$}}
\makeatother



\newcommand{\descref}[2]{\hyperref[#1]{\textnormal{\textcolor{black}{(}\textcolor{blue}{\bf #2}\textcolor{black}{)}}}}

\newcommand{\dref}[2]{\hyperref[#1]{\textcolor{black}{(}\textcolor{blue}{\bf #2}\textcolor{black}{)}}}
\newcommand{\be} {\begin{eqnarray}}
\newcommand{\ee} {\end{eqnarray}}
\newcommand{\Bea} {\begin{eqnarray*}}
\newcommand{\Eea} {\end{eqnarray*}}
\newcommand{\rr}{\rightarrow}

\newcommand{\la} {\lambda}

\newcommand{\f}{\infty}

\newcommand{\R}{\mathbb{R}}

\newcommand{\lab} {\label}

\newcommand{\sgn}{\mathop\mathrm{sgn}}
\newcommand{\al}{\alpha}






\newcommand{\D}{\Delta}
\newcommand{\Z}{\mathbb{Z}}
\newcommand{\sumj}{\sum_{j \in \Z}}
\newcommand{\BV}{\textrm{BV}}
\newcommand{\mM}{\mathcal{M}}
\newcommand{\jph}{j+1/2}
\newcommand{\jmh}{j-1/2}
\newcommand{\nph}{n+1/2}
\newcommand{\mF}{\mathcal{F}}
\newcommand{\mK}{\mathcal{K}}
\newcommand{\mP}{\mathcal{P}}
\newcommand{\mC}{\mathcal{C}}
\newcommand{\mD}{\mathcal{D}}

\DeclareMathOperator{\loc}{loc}

\DeclareMathOperator{\TV}{TV}

\newcommand{\norm}[1]{\left|\hspace{-0.2mm}\left| #1 \right|\hspace{-0.2mm}\right|}
\newcommand{\abs}[1]{\left| #1\right|}






\newcounter{whitney}
\refstepcounter{whitney}

\newcounter{ineqcounter}
\refstepcounter{ineqcounter}
\makeatletter
\def\ps@pprintTitle{%
\let\@oddhead\@empty
\let\@evenhead\@empty
\def\@oddfoot{}%
\let\@evenfoot\@oddfoot}
\makeatother
\usepackage[doublespacing]{setspace}
\usepackage[titletoc,toc,page]{appendix}




%
\usepackage{pgf,tikz}
\usetikzlibrary{arrows}
\usetikzlibrary{decorations.pathreplacing}
\linespread{1}

\begin{document}

\begin{frontmatter}

\title{Convergence of a Godunov scheme for degenerate conservation laws with  BV spatial flux and a study of Panov type fluxes}

\author[myaddress1]{Shyam Sundar Ghoshal}
\ead{ghoshal@tifrbng.res.in}


\address[myaddress1]{Centre for Applicable Mathematics,Tata Institute of Fundamental Research, Post Bag No 6503, Sharadanagar, Bangalore - 560065, India.}

\author[myaddress2]{John D. Towers}
\ead{john.towers@cox.net}

\author[myaddress1]{Ganesh Vaidya}
\ead{ganesh@tifrbng.res.in}

\address[myaddress2]
{MiraCosta College, 3333 Manchester Avenue, Cardiff-by-the-Sea, CA 92007-1516, USA.}

\begin{abstract}
	In this article we prove convergence of the Godunov scheme of \cite{GJT_2019}
	for a scalar conservation law in one space dimension with a
	spatially discontinuous flux. There may be infinitely many flux discontinuities, and the set of discontinuities may have
	accumulation points. Thus the existence of traces cannot be assumed. In contrast to the study appearing in 		\cite{GJT_2019}, we do not restrict the flux to be unimodal. We allow for the case where the flux
 	has degeneracies, i.e., the flux may vanish on some interval
 	of state space. Since the flux is allowed to be degenerate, the corresponding singular map may not be invertible, and thus 	the convergence proof appearing in \cite{GJT_2019} does not
 	pertain. We prove that the Godunov approximations nevertheless do converge in the presence
	of flux degeneracy, using an alternative method of proof.
	We additionally consider the case where the flux has the form described in \cite{Panov2009a}. For this case
	we prove convergence via yet another method.
	This method of proof provides a spatial variation bound on the solutions, which is of independent interest.	
	We present numerical examples that illustrate the theory.
	\end{abstract}
	\begin{keyword}
		conservation law \sep discontinuous flux \sep existence \sep finite volume scheme \sep adapted entropy 		\sep entropy inequality \sep Godunov scheme. 
	\medskip
    	\MSC[2010] 35L65, 65M06, 65M08, 65M12
	\end{keyword}

\end{frontmatter}
\tableofcontents
\section{Introduction}\label{section_intro}
In this article we prove convergence of the Godunov scheme of \cite{GJT_2019} to an adapted entropy solution, as
defined in \cite{GTV_2020},
of  the initial value problem for scalar conservation laws with spatially dependent flux given by
\begin{eqnarray}\label{1}
u_t+A(x,u)_x&=& 0\quad\quad\quad \text{for}\,\,\,(t,x) \in(0,T)\times\R=:Q,\label{eq:discont}\\
\label{2}u(x,0)&=& u_0(x),\,\quad\text{for}\,\,\, x\in \mathbb{R}.\label{eq:data}
\end{eqnarray}
Here the set of spatial
discontinuities  of the  flux $A(x,u)$  is allowed to be infinite with accumulation points. In contrast to the study appearing in \cite{GJT_2019}, we do not restrict the flux to be unimodal. We allow for the case where the flux
 $A(x,u)$ has degeneracies, i.e.,  $u \rightarrow A_u(x,u)$ may vanish on some $x$-dependent interval
 of state space $(u_M^-(x),u_M^+(x))$, see Figure~\ref{figure:two_fluxes}. 
 Well-posedness of adapted entropy solutions for the case of a degenerate flux was established in \cite{GTV_2020}.
 Since the flux is allowed to be degenerate, the corresponding singular map may not be invertible, and thus the convergence proof appearing in \cite{GJT_2019} does not
 pertain. We instead use the compactness method of \cite{BGKT} to prove convergence. We also provide another, alternative,
 convergence proof when the flux has the form $A(x,u) = g(\beta(x,u))$ as in \cite{Panov2009a}. This method of proof provides a spatial variation bound on the solutions, which is of independent interest. Spatial variation bounds are not generally available
 for solutions of conservation laws with discontinuous flux (see \cite{ADGG,GTV_2020,Ghoshal-JDE}).

\begin{figure}
	\centering

\tikzset{every picture/.style={line width=0.75pt}} 

\begin{tikzpicture}[x=0.75pt,y=0.75pt,yscale=-1,xscale=1]

\draw [line width=1.5]    (68.5,329.99) -- (599.96,330.06) ;
\draw [shift={(602.96,330.06)}, rotate = 180.01] [color={rgb, 255:red, 0; green, 0; blue, 0 }  ][line width=1.5]    (14.21,-4.28) .. controls (9.04,-1.82) and (4.3,-0.39) .. (0,0) .. controls (4.3,0.39) and (9.04,1.82) .. (14.21,4.28)   ;
\draw [color={rgb, 255:red, 0; green, 0; blue, 200 }  ,draw opacity=1 ][line width=1.5]    (406.58,270.62) -- (462.16,270.62) ;
\draw [color={rgb, 255:red, 0; green, 0; blue, 200 }  ,draw opacity=1 ][line width=1.5]    (197.33,270.62) -- (265.67,270.62) ;
\draw [line width=2.25]  [dash pattern={on 2.53pt off 3.02pt}]  (198.09,269.79) -- (197.78,297.42) -- (197.51,326.72) ;
\draw [line width=2.25]  [dash pattern={on 2.53pt off 3.02pt}]  (407.52,271.46) -- (407.2,299.93) -- (406.93,329.23) ;
\draw [line width=2.25]  [dash pattern={on 2.53pt off 3.02pt}]  (461.13,271.46) -- (460.82,299.09) -- (460.55,328.39) ;
\draw [color={rgb, 255:red, 0; green, 0; blue, 200 }  ,draw opacity=1 ][line width=1.5]    (265.56,270.61) .. controls (301.57,241.98) and (283.07,225.94) .. (331.28,172.24) ;
\draw [color={rgb, 255:red, 0; green, 0; blue, 200 }  ,draw opacity=1 ][line width=1.5]    (331.28,172.24) .. controls (377.08,127.89) and (343.65,117.46) .. (391.85,63.76) ;
\draw [color={rgb, 255:red, 0; green, 0; blue, 200 }  ,draw opacity=1 ][line width=1.5]    (197.39,269.78) .. controls (158.23,232.24) and (182.73,225.41) .. (138.02,170.23) ;
\draw [color={rgb, 255:red, 0; green, 0; blue, 200 }  ,draw opacity=1 ][line width=1.5]    (138.02,170.23) .. controls (95.12,124.25) and (129.13,116.43) .. (84.42,61.25) ;
\draw [line width=2.25]  [dash pattern={on 2.53pt off 3.02pt}]  (265.95,270.62) -- (265.63,298.25) -- (265.36,327.55) ;
\draw [color={rgb, 255:red, 0; green, 0; blue, 200 }  ,draw opacity=1 ][line width=1.5]    (460.74,270.61) .. controls (496.75,241.98) and (478.25,225.94) .. (526.46,172.24) ;
\draw [color={rgb, 255:red, 0; green, 0; blue, 200 }  ,draw opacity=1 ][line width=1.5]    (526.46,172.24) .. controls (572.26,127.89) and (538.83,117.46) .. (587.04,63.76) ;
\draw [color={rgb, 255:red, 0; green, 0; blue, 200 }  ,draw opacity=1 ][line width=1.5]    (407.65,270.62) .. controls (368.49,233.08) and (393,226.24) .. (348.28,171.07) ;
\draw [color={rgb, 255:red, 0; green, 0; blue, 200 }  ,draw opacity=1 ][line width=1.5]    (348.28,171.07) .. controls (305.38,125.09) and (339.39,117.26) .. (294.68,62.09) ;

\draw (596.96,333.35) node [anchor=north west][inner sep=0.75pt]    {$u$};
\draw (379.11,334.75) node [anchor=north west][inner sep=0.75pt]    {$u^{-}_{M}( x_{2})$};
\draw (442.88,333.75) node [anchor=north west][inner sep=0.75pt]    {$u^{+}_{M}( x_{2})$};
\draw (239.7,333.92) node [anchor=north west][inner sep=0.75pt]    {$u^{+}_{M}( x_{1})$};
\draw (171.84,333.92) node [anchor=north west][inner sep=0.75pt]    {$u^{-}_{M}( x_{1})$};
\draw (200.3,162.49) node [anchor=north west][inner sep=0.75pt]    {$A( x_{1} ,\cdot )$};
\draw (410.56,165) node [anchor=north west][inner sep=0.75pt]    {$A( x_{2} ,\cdot )$};

\end{tikzpicture}

\caption{This illustrates a flux $A(x,u)$ at two spatial points $x_1,x_2\in\R$. Here $A(x_i,u),\,i=1,2$ are functions having flat regions in $[u_M^-(x_1),u_M^+(x_2)],\,i=1,2$  respectively. The mapping $u \mapsto A(x,u)$ attains the same minimum value, independent of $x$.
\label{figure:two_fluxes}}
\end{figure}
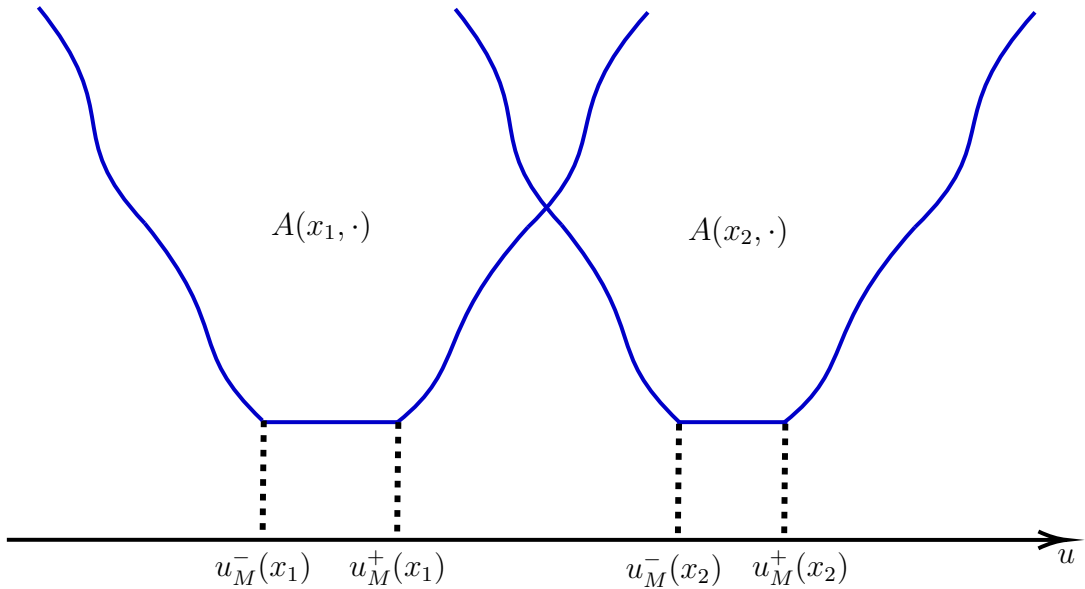

Equations of the type \eqref{eq:discont}--\eqref{eq:data} with spatial discontinuities are well known in mathematical and engineering literature due to their wide range of applications. Some of the well-known applications are sedimentation \cite{diehl1996conservation}, petroleum industry and polymer flooding  \cite{shen2017uniqueness}, two phase flow in heterogeneous porous medium \cite{andreianov2013vanishing}, clarifier thickener unit used in waste water treatment plants \cite{burger2006extended}, traffic flow with abruptly changing road condition  \cite{BKT_2009}, and the hydrodynamic limit of interacting particle systems with discontinuous speed parameter \cite{CEK}. 
The case of a degenerate flux occurs in one version of the  fundamental diagram often used in the transportation engineering literature 
in the so-called Cell Transmission Model (CTM), e.g., \cite{daganzo_94}.
Another case of a degenerate flux corresponds a so-called diphasic behavior  \cite{BCR}. The intervals of state space 
$(-\infty,u_M^-(x))$ 
and $(u_M^+(x),\infty)$ correspond to different phases, and the interval
$(u_M^-,u_M^+)$ (the flat region) corresponds to a mixture of the two phases.

This problem has been gaining recent mathematical interest, especially from the point of  well-posedness. It is well known that when $x \mapsto A(x,u)$ is not sufficiently smooth, the classical Kruzkhov inequality, 
\begin{eqnarray}\label{kruzkov}
{\partial_t} |u-k|+{\partial_x} \left[\sgn(u-k) (A(x,u)-A(x,k))\right] + \sgn(u-k) {\partial_x}A(x,k) \leq 0, \quad k\in \R,
\end{eqnarray}
does not make sense due to the term $\sgn(u-k){\partial_x}A(x,k).$ When $A(x,u)$ has finitely many discontinuities this obstacle is usually overcome by imposing an interface entropy condition at each point of spatial flux discontinuity. As a result, the entropy theories for uniqueness of solutions of conservation laws, with $A(x,u)$ having finitely many discontinuities, generally require existence of traces, to specify the interface entropy condition, see for example, \cite{AJG, AMV,  andreianov2013vanishing, AKR} and the references therein. But for the cases where $A(x,u)$ has infinitely many discontinuities, the existence of traces is not guaranteed. Thus in order to formulate
a definition of solution in this case, an entropy condition that is independent of traces is required.  One of the major developments in this direction is the notion of \textit{adapted entropy solutions} which generalizes the classical Kruzhkov theory to a certain class of fluxes having possibly infinite discontinuities. This theory has the advantage that it does not require the existence of
traces at the location of the spatial flux discontinuities. In this direction,
an adapted entropy inequality was proposed in \cite{AudussePerthame} for monotone and unimodal flux functions by introducing a certain class of steady state solutions in \eqref{kruzkov}.  Solutions satisfying this inequality were shown to be unique. This work was generalized in \cite{Panov2009a} to other class of functions which are of the form $A(x,u)=g(\beta(x,u)),$ where $g \in C(\R)$ and $\beta(x, \cdot)$ is a  monotone function.  Though \cite{Panov2009a} generalizes the notion adapted entropy solution to fairly large class of fluxes, 
it requires that $u \mapsto A(x,u)$ are of same type for all $x$ in some sense. For the fluxes which are unimodal away from the degeneracies, this notion of adapted entropy was extended to obtain the uniqueness of solutions in \cite{GTV_2020}.    The question of existence of solutions satisfying the adapted entropy inequality, was resolved for the monotone fluxes by Piccoli et al. in \cite{piccoli2018general} via wave front tracking, assuming that the fluxes are convex. Numerically, the existence of the solutions with monotone fluxes, without the assumption of convexity of flux $A(x,u)$ was established in \cite {JDT_2020}  by proposing a convergent numerical scheme. This study was further augmented by Ghoshal et al. in \cite{GJT_2019}, where the existence of the solutions was studied via convergence of a Godunov type numerical scheme, for the unimodal (non-monotone) fluxes without any degeneracies. When $A(x,\cdot)$ admits degeneracies, well-posedness was settled in \cite{GTV_2020}, where the existence of the solutions was proved via front tracking algorithm and uniqueness was proven by suitably modifying adapted entropy condition, to circumvent the presence of the degeneracies.

This article focuses on the existence of the adapted entropy solution in  the case of degenerate fluxes, via a convergent Godunov type numerical scheme. Compactness has always been an issue when it comes to convergence of numerical schemes for conservation laws with discontinuous flux, due to the blow up of total variation in finite time. A well known method to tackle this issue is the use of singular maps. However, the classical technique of singular maps cannot be applied in this context as singular map are not invertible when the fluxes are degenerate. In \cite{GTV_2020}, this obstacle of non invertibility of the singular maps was overcome by using the augmented mapping $u\mapsto \Psi(x,u) + \pi(x,u)$, which is invertible and existence was shown via wave front tracking algorithms.

Another important question regarding conservation law with discontinuous flux is the spatial total variation of the solutions. Though the solution operator forms an $L^1$ contractive semigroup, $L^1$ contractivity does not imply that solutions are TVD unlike in the homogeneous case i.e. $A(x,u)=f(u)$ because solutions do not satisfy the translation invariance  property when the fluxes heterogeneous. Thus in general BV bounds on the solutions do not exist.
For the case of a single flux discontinuity, existence and non existence of TV bounds are studied in  \cite{ADGG, Ghoshal-JDE, NHM} using the explicit Lax-Oleinik formulae for conservation laws with discontinuous flux derived in \cite{adimurthi2000conservation}.  These results are not true in general when the fluxes admit infinitely many spatial discontinuities as pointed out in \cite{GTV_2020}. Nevertheless,  existence of BV  bounds were proved  via front tracking in \cite{GTV_2020} for certain class of initial data under the assumption that the fluxes are uniformly convex. Since degenerate fluxes are not uniformly convex, the question remains as to whether  there exists sufficient conditions which assure the existence of TV bounds for the solutions when initial data is in BV and fluxes are allowed to be degenerate.  The BV estimate obtained in this article indeed gives an affirmative answer to this question, for a certain class of degenerate fluxes $A(\cdot,\cdot).$

In the current article we establish existence of the solution via convergence of a Godunov type scheme and show that limit of the finite volume approximation satisfies the adapted entropy inequality obtained in \cite{GTV_2020}. Convergence is proved for two class of flux functions. For the first class, convergence is established by proving BV bounds away from the points of spatial discontinuity and for the second class by proving BV bounds on $\beta(\cdot,u(\cdot,t))$. In the recent years, there has been a considerable developments in the study of BV regularity of conservation laws with discontinuous flux. However, most of the  BV results either assume uniform convexity or monotonicity of the flux function $A(x, \cdot)$. To the best of our knowledge this is the first BV result for conservation laws with discontinuous flux,  which requires neither monotonicity nor uniform convexity of $u \mapsto A(x,u)$. 

As in \cite{GJT_2019}, the scheme and results of
 the present paper can be viewed an extension of those of \cite{AJG}. By employing the adapted entropy approach, we are able to dispense with the regularity assumption appearing in  \cite{AJG}, as well as the restriction to finitely many flux discontinuities. The scheme of \cite{AJG} uses a nonstandard spatial grid that is suitable when there is a single flux discontinuity, but becomes complicated in the case of multiple flux discontinuities. Our scheme employs a
 standard spatial grid. Moreover, with our algorithm
 it is not necessary to locate, identify, or process the flux discontinuities in any special way.
We simply apply the Godunov interface flux at every grid cell boundary. At cell boundaries where there is no flux discontinuity, the interface flux automatically reverts to the classical Godunov flux, as desired. 

\subsection{Adapted Entropy Solutions}
We make the following assumptions about the flux $A(x,u)$:
\begin{enumerate}[label=\textbf{A-\arabic*}]
	\item \label{A1}$A(x,u)$ is continuous on $\mathbb{R}\setminus {\Omega} \times \mathbb{R},$ where $\Omega$ is a closed zero measure set. 
	
	\item \label{A2} There exists a locally bounded function $q:\R\rr\R$ such that
	\begin{equation}
	\abs{A(x,u)-A(x,v)}\leq q(M)\abs{u-v}\mbox{ for a.e. }x\in\R \mbox{ and }u,v \in[-M,M]\mbox{ with }M>0.
	\end{equation}

	\item \label{A3}There exist  functions $u_M^{\pm}:\mathbb{R} \rightarrow \mathbb{R}$ which are continuous on $\R \setminus \Omega,$ such that $u_M^-(x)\leq u_M^+(x)$ for $x\in\R\setminus\Omega$ and $A(x,\cdot)$ is decreasing on $(-\infty,u_M^-(x)]$ and increasing on $[u_M^+(x),\infty)$ satisfying $A(x,z)=0$ for all $u_M^-(x) \leq  z \leq u_M^+(x)$.
    \item \label{A4} We assume that there is a continuous function $\gamma: [0,\infty) \rightarrow [0,\infty)$,
	which is strictly increasing with $\gamma(0)=0$, $\gamma(+\infty) = +\infty$, and
	such that
	\begin{equation}\label{uniform_unimodal}
	\begin{split}
	&\textrm{$A(x,u) \ge \gamma(u-u^+_M(x))$ for all $x\in \R$ and $u \in [u^+_M(x),\infty)$},\\
	&\textrm{$A(x,u) \ge \gamma(-(u-u^-_M(x)))$ for all $x\in \R$ and $u \in (-\infty,u^-_M(x)]$}.
	\end{split}
	\end{equation}
\end{enumerate}

\begin{definition}\label{def:states}
	A function $k: \R \rightarrow \R$ is said to be a stationary state if $u(t,x)=k(x)$ is the weak solution to the IVP \eqref{eq:discont}--\eqref{eq:data}, with $u_0(x)=k(x)$. For $\al>0$, we work with two types of stationary states $k_\al^{+}:\R\rr(u_M^+,\f)$ and $k_\al^{-}:\R\rr(-\f,u_M^-)$ such that 
\begin{equation*}
	A(x,k^{\pm}_\al(x))=\al.
\end{equation*}
We define $\mathscr{S}_\al$ to be the set of all stationary states corresponding to height $\al\geq0$.
\end{definition}
\begin{remark}
	Note that for $\al=0$ there are infinitely many choices for stationary states $k(x)$. A stationary state only needs to satisfy $k(x)\in[u_M^-(x),u_M^+(x)]$. We observe that
	
	$u_M^{\pm}$ can be written as 
	\begin{equation}
	u^-_M(x)=\inf\{u\in \R;\, A(x,u)=0\} \mbox{ and } u^+_M(x)=\sup\{u\in \R;\, A(x,u)=0\}.
	\end{equation}
	For notational brevity we denote a stationary state by $k_\al(x)$ for $\al\geq0$. When $\al>0$, $k_\al$ coincides with one of $k_\al^{\pm}$.
	
\end{remark} 

\begin{definition}[Adapted Entropy Condition] \label{def_adapted_entropy}
	A function $u\in L^{\infty}(Q) \cap C([0,T],L_{loc}^1(\R))$ is an adapted entropy solution of the Cauchy problem  if it satisfies the following inequality in the sense of distributions:
	\begin{equation}\label{ineq:adapted}
	{\partial_t} |u(x,t)-k^{\pm}_{\alpha}(x)| +{\partial_x}\left[ \sgn (u-k^{\pm}_{\alpha}(x)) (A(u,x)-\alpha) \right] \leq 0,
	\end{equation}for $\alpha \geq 0.$
	Or equivalently, for all $0\leq\phi \in C_c^{\infty}([0,T) \times \R^+)$
		\begin{align}
	&\int\limits_Q |u(t,x)-k_{\alpha}(x)|\phi_t(t,x)+\sgn (u(t,x)-k_{\alpha}(x)) (A(x,u(t,x))-\alpha)\phi_x(t,x)\, dx dt\nonumber \\
	&+\int\limits_{\R}|u_0(x)-k_{\alpha}(x)|\phi(0,x)\,dx\geq 0.\label{E2}
	\end{align}

\end{definition}
Before we conclude the discussion on adapted entropy, we state the uniqueness and stability result of the adapted entropy solution. 
\begin{theorem}\label{theorem1}(Uniqueness Theorem \cite{GTV_2020})
	Let $u,v \in  L^{\infty}(Q) \cap C(0,T;L^1_{loc}(\R))$ be entropy solutions to the IVP \eqref{eq:discont}--\eqref{eq:data} with 	initial data $u_0,v_0 \in L^{\infty}(\R).$ Assume the flux satisfies the hypothesis \eqref{A1}--\eqref{A3}. Then for $t\in [0,T]$ 	the following holds,
	\begin{equation*}
	\int\limits_{a}^{b}|u(t,x)-v(t,x)|dx \leq \int\limits_{a+Mt}^{b-Mt}|u_0(x)-v_0(x)|dx,
	\end{equation*}
	for  $-\f\leq a<b\leq \f$ and $M:=\sup\{\abs{A_u(x,u(t,x))};\,x\in\R,0\leq t\leq T\}$.
\end{theorem}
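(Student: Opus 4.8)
The plan is to adapt Kruzhkov's doubling-of-variables technique to the adapted entropy framework, the essential point being that each stationary state removes the singular term $\sgn(u-k)\partial_x A(x,k)$ from the Kruzhkov inequality \eqref{kruzkov}. Indeed, writing $\al = A(x,k_\al(x))$, the adapted entropy inequality \eqref{ineq:adapted} coincides with the classical Kruzhkov inequality
\[
\partial_t |u - k_\al(x)| + \partial_x[\sgn(u - k_\al(x))(A(x,u) - A(x,k_\al(x)))] \le 0,
\]
in which the offending term vanishes \emph{precisely because} $x \mapsto A(x,k_\al(x)) \equiv \al$ is constant. Thus the stationary states $k_\al^{\pm}$ play the role of Kruzhkov's admissible constants, and the whole difficulty is reduced to comparing two solutions using only this privileged one-parameter family of ``constants'' rather than all of $\R$.

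First I would double the variables, writing $u = u(t,x)$ and $v = v(s,y)$. The comparison is effected by choosing, for a.e.\ fixed $(s,y)$, the stationary state $k$ passing through the value $v(s,y)$: set $\al = A(y,v(s,y))$ and select the branch $k_\al^{+}$ or $k_\al^{-}$ according to whether $v(s,y) \ge u_M^+(y)$ or $v(s,y) \le u_M^-(y)$, so that $k(y) = v(s,y)$; this is possible by the monotonicity of $A(y,\cdot)$ on each branch granted by \eqref{A3}. I would then insert this $k$ into $u$'s inequality, and symmetrically the stationary state through $u(t,x)$ into $v$'s inequality. Adding the two and testing against a mollifier $\phi(t,x,s,y)$ concentrated on the diagonal $\{t=s,\,x=y\}$, I would use the continuity of $u_M^{\pm}$ and of the stationary states off the zero-measure set $\Omega$ from \eqref{A1} to argue that, in the diagonal limit, $\sgn(u(t,x)-k(x)) \to \sgn(u(t,x)-v(s,y))$ and $A(x,k(x)) \to A(y,v(s,y))$, thereby recovering the standard Kruzhkov entropy-flux structure.

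To obtain the finite-speed cone $[a+Mt,\,b-Mt]$ rather than a global contraction, I would localize the test function spatially by a Lipschitz cutoff adapted to the cone, exploiting that \eqref{A2} bounds the entropy flux by $|\sgn(u-v)(A(x,u)-A(x,v))| \le M|u-v|$ with $M = \sup|A_u|$ along the solution. The weight is chosen so that $\phi_t + M|\phi_x| \le 0$, which propagates information no faster than speed $M$; collapsing the diagonalizing mollifier to the identity then sends the initial-data term to $\int_{a+Mt}^{b-Mt}|u_0-v_0|\,dx$ and the running term to $\int_a^b |u(t,x)-v(t,x)|\,dx$, yielding the stated estimate.

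I expect the principal obstacle to be the degeneracy permitted by \eqref{A3}. When $v(s,y)$ lies in the flat interval $[u_M^-(y),u_M^+(y)]$ the height $\al=0$ is attained, the stationary state through $v(s,y)$ is no longer unique, and $\sgn(u-k)$ need not converge to $\sgn(u-v)$ as the two solutions straddle the degenerate band. This is exactly where the naive Kruzhkov comparison breaks: one must verify that the entropy-dissipation term keeps the correct sign even when one solution sits in the flat region, using the fact that $u \mapsto A(x,u)$ attains the \emph{same} minimum value $0$ for every $x$ (see Figure~\ref{figure:two_fluxes}). This is the step that forces one to replace the non-invertible singular map by the augmented, invertible map $u \mapsto \Psi(x,u) + \pi(x,u)$ of \cite{GTV_2020}, and I anticipate that the bulk of the work — and the point at which the common-minimum structure of \eqref{A3} is indispensable — will be concentrated precisely here.
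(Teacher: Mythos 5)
You should first be aware that the paper contains no proof of Theorem~\ref{theorem1}: it is quoted verbatim from \cite{GTV_2020}, so the relevant comparison is with the argument in that reference. Your outline reproduces the Audusse--Perthame strategy that \cite{GTV_2020} adapts --- doubling of variables with the family $k_\al^{\pm}$ in place of Kruzhkov constants, selecting for a.e.\ $(s,y)$ the stationary state through $v(s,y)$, collapsing the diagonal to a Kato inequality, and localizing in a cone of slope $M$ via a test function with $\phi_t+M\abs{\phi_x}\le 0$ --- and away from the flat band this is the right argument and essentially the one used there.

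The gap sits exactly where you place it, and you flag it without closing it. When $v(s,y)\in(u_M^-(y),u_M^+(y))$ the parametrization degenerates: $\al=A(y,v(s,y))=0$ for the entire band, the stationary state through $v(s,y)$ is no longer determined by its height, and, as you note, $\sgn(u(t,x)-k(x))$ need not converge to $\sgn(u(t,x)-v(s,y))$ in the diagonal limit. This is precisely the case excluded in \cite{AudussePerthame}, so without resolving it your argument proves the theorem only for nondegenerate fluxes. Moreover, the remedy you point to is the wrong tool: the augmented invertible map $u\mapsto\Psi(x,u)+\pi(x,u)$ of \cite{GTV_2020} is used there (and is described in the introduction of the present paper) to recover compactness in the \emph{existence} proof via front tracking; it plays no role in the uniqueness argument. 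Uniqueness on the band is instead obtained by modifying the adapted entropy formulation and arguing by cases, using the common-minimum structure of \eqref{A3}: when both $u$ and $v$ lie in $[u_M^-,u_M^+]$ the entropy flux $\sgn(u-v)\lbr A(x,u)-A(x,v)\rbr$ vanishes identically because $A(x,\cdot)\equiv 0$ there, so no sign information is needed, and when only one of them does, the extremal states $k_0^{\pm}=u_M^{\pm}$ serve as the comparison functions and the sign of the dissipation must be checked directly. Carrying out that case analysis is the substantive content missing from your sketch.
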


In Section~\ref{sec_godunov} we describe the Godunov scheme of \cite{GJT_2019} and 
extend the convergence result of that paper to the more general setting of this paper, where the flux may be degenerate.
In Section~\ref{sec_BV} we focus on the case where the flux has the so-called Panov form: $A(x,u)=g(\beta(x,u))$.
We prove that the Godunov scheme converges in this case also, and also establish a spatial total variation bound
for the approximate solutions. In Section~\ref{sec_numerical} we present the results of numerical experiments.

  \section{The Godunov scheme and proof of convergence}\label{sec_godunov} 
   
 In this section we show that the Godunov scheme of \cite{GJT_2019} converges to the unique entropy solution.
This provides another existence result, in addition to the one established in \cite{GTV_2020} via
front tracking.
 We state some additional hypotheses, and
then use the compactness method of \cite{BGKT} (the so-called $\BV_{\textrm{loc}}$ method), rather than the singular mapping method that was used in 
\cite{GJT_2019}. Most of the 
relevant lemmas of \cite{GJT_2019} still apply with the setup of this paper. 

We use the compactness method of \cite{BGKT} because $u\mapsto \Psi(x,u)$ is not invertible, due
to the fact that $u \mapsto A(x,u)$ is constant on $[u^-_{M(x)},u^+_{M(x)}]$. 
In \cite{GTV_2020}
we overcame this obstacle by using the augmented mapping $u\mapsto \Psi(x,u) + \pi(x,u)$, which is invertible. We have not discovered how to apply this technique to our Godunov scheme.

In this section, in addition to \eqref{A1}--\eqref{A4}, we assume the following:

\begin{enumerate}[label=\textbf{B-\arabic*}]

\item \label{B1} Referring to Assumption \eqref{A1}, we assume that
the mapping $x \mapsto A(x,u)$ is not only piecewise continuous, but also piecewise constant. We still assume that 
the set of discontinuities $\Omega$ is a closed set of zero
measure.
	
    \item \label{B2} There exists a continuous function $\eta: \mathbb{R} \rightarrow \mathbb{R}$ and a BV function $a:\R \rightarrow \R$ such that \begin{eqnarray}
    |A(x,u)-A(y,u)|\leq \eta(u)|a(x)-a(y)|.
    \end{eqnarray}
    \item \label{B3} $u_M^{\pm} \in \BV(\R)$.
\end{enumerate}
\medskip

Define $u_M(x) := \left(u_M^-(x)+u_M^+(x) \right)/2$. Note that $u_M \in \BV(\R)$, a consequence of   \descref{B3}{B-3}.
As in \cite{GJT_2019}, for now we assume that {\color{blue}$u_0-u_M$} has compact support and $u_0 \in \BV(\R)$.
We will show  that the solution we obtain as a limit of numerical approximations satisfies the 
adapted entropy inequality \eqref{ineq:adapted}. Using Theorem~\ref{theorem1}, the resulting existence theorem is then extended to
the case of $u_0 \in  L^{\infty}(\R)$ via approximations to $u_0$ that are in $\BV$ and
are equal to $u_M$ outside of compact sets.

For $\D x>0$ and $\D t>0$ consider equidistant spatial grid points $x_j:=j\D x$ for $j\in\Z$ and temporal grid points $t^n:=n\D t$ 
for integers $0 \le n\le N$. Here $N$ is the integer such that $T \in [t^N,t^{N+1})$. Let $\la:=\D t/\D x$. Let $\chi_j(x)$ denote the indicator function of $I_j:=[x_j - \D x /2, x_j + \D x /2)$, and let
$\chi^n(t)$ denote the indicator function of $I^n:=[t^n,t^{n+1})$. We approximate the initial data according to:
\begin{equation}
u^{\D}_0:=\sumj\chi_j(x)u^0_j\quad \mbox{where }u^0_j=u_0(x_j)\mbox{ for }j\in\Z.
\end{equation}
The  approximations generated by the scheme are denoted by $u_j^n$, where $u_j^n \approx u(x_j,t^n)$.
The grid function $\{u_j^n\}$ is extended to a function defined on $\Pi_T$ via
\begin{equation}\label{def_u_De}
u^{\D}(x,t) =\sum_{n=0}^N \sumj \chi_j(x) \chi^n(t) u_j^n.
\end{equation}
We use the symbols $\Delta_{\pm}$ to denote spatial difference operators:
\begin{equation}
\Delta_+ z_j = z_{j+1}-z_j, \quad \Delta_- z_j = z_{j}-z_{j-1}.
\end{equation}
We use the same Godunov type scheme that we employed in \cite{GJT_2019}:
\begin{equation}\label{scheme_A}
u_j^{n+1} = u_j^n - \lambda \D_- \bar{A}(u^n_j,u^n_{j+1},x_j,x_{j+1}), \quad j \in \Z, \quad n=0,1,\ldots,N,
\end{equation}
where the numerical flux $\bar{A}$ is the generalized Godunov flux of \cite{AJG}:
\begin{equation}\label{def_bar_A_direct}
\bar{A}(u,v,x_j,x_{j+1}) :=
\max \left\{A(x_j,\max(u,u_M(x_j))) , A(x_{j+1},\min(v,u_M(x_{j+1})))  \right\}.
\end{equation}
$\bar{A}$ is a generalization of the classical Godunov numerical flux \cite{CranMaj:Monoton,leveque_book} in the sense
that
\begin{equation}
\bar{A}(u,v,x,x) = 
\begin{cases}
\min_{w \in [u,v]} A(x,w), \quad & u\le v,\\
\max_{w \in [u,v]} A(x,w), \quad & u\ge v.
\end{cases} 
\end{equation}

Let
\begin{equation}\label{A_2}
\bar{\alpha} = \sup_{x \in \R} A(x,u_0(x)),
\end{equation}
which is finite  due to Assumptions  \descref{A2}{A-2}, \descref{B2}{B-2}, and $u_0 \in \BV(\R)$.
Define $k_{\bar{\alpha}}^{\pm}(x)$ via the equations
\begin{equation}\label{define_k}
\begin{split}
&A(k_{\bar{\alpha}}^{-}(x),x) = \bar{\alpha}, \quad k_{\bar{\alpha}}^{-}(x) \le u^-_M(x),\\
&A(k_{\bar{\alpha}}^{+}(x),x) = \bar{\alpha}, \quad k_{\bar{\alpha}}^{+}(x) \ge u^+_M(x).
\end{split}
\end{equation}
By slightly modifying the proof of  Lemma 3.1 of \cite{GJT_2019} we obtain the following lemma:
\begin{lemma}\label{k_bounded}
	The following bounds are satisfied: 
	\begin{equation}
	\sup_{x \in \R}k_{\bar{\alpha}}^{\pm}(x) < \infty.
	\end{equation}
\end{lemma}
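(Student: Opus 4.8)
The plan is to treat the two stationary states separately and to reduce the whole statement to the boundedness of $u_M^{\pm}$ combined with the uniform coercivity supplied by \descref{A4}{A-4}. First I would record two preliminary facts. By \descref{A3}{A-3} the map $u\mapsto A(x,u)$ is nonnegative (it vanishes on $[u_M^-(x),u_M^+(x)]$ and is monotone away from it), so $\bar\alpha$ defined in \eqref{A_2} satisfies $\bar\alpha\ge 0$; it is also finite, as already recorded in the text. Next, by \descref{B3}{B-3} the functions $u_M^{\pm}$ lie in $\BV(\R)$ and are therefore bounded, so that $U^{+}:=\sup_{x\in\R}u_M^+(x)$ and $U^{-}:=\sup_{x\in\R}u_M^-(x)$ are both finite. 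With these in hand, the state $k_{\bar\alpha}^{-}$ requires no further work: by its defining property in \eqref{define_k} one has $k_{\bar\alpha}^{-}(x)\le u_M^-(x)\le U^{-}<\infty$ for every $x$, which immediately yields $\sup_{x\in\R}k_{\bar\alpha}^{-}(x)<\infty$.

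The substantive half is the bound on $k_{\bar\alpha}^{+}$, and here I would invoke \descref{A4}{A-4}. Since $k_{\bar\alpha}^{+}(x)\ge u_M^+(x)$, the first inequality in \eqref{uniform_unimodal} applies at $u=k_{\bar\alpha}^{+}(x)$, giving
\begin{equation*}
\bar\alpha = A\bigl(x,k_{\bar\alpha}^{+}(x)\bigr) \ge \gamma\bigl(k_{\bar\alpha}^{+}(x)-u_M^+(x)\bigr).
\end{equation*}
Because $\gamma$ is continuous, strictly increasing, and satisfies $\gamma(0)=0$ and $\gamma(+\infty)=+\infty$, it maps $[0,\infty)$ bijectively onto $[0,\infty)$ and so possesses a strictly increasing inverse $\gamma^{-1}$ defined on all of $[0,\infty)$; applying it to the displayed inequality (legitimate since $\bar\alpha\ge 0$) gives $k_{\bar\alpha}^{+}(x)-u_M^+(x)\le \gamma^{-1}(\bar\alpha)$ for every $x$. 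Hence $k_{\bar\alpha}^{+}(x)\le U^{+}+\gamma^{-1}(\bar\alpha)$, a bound independent of $x$, and taking the supremum over $x\in\R$ completes the proof of $\sup_{x\in\R}k_{\bar\alpha}^{+}(x)<\infty$.

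I do not anticipate a genuine obstacle: the argument is short and its only real ingredient is the uniform coercivity \descref{A4}{A-4}, which is precisely what decouples the growth of $A(x,\cdot)$ from $x$ and lets the single $x$-independent number $\gamma^{-1}(\bar\alpha)$ serve as a uniform bound. The one point worth flagging is that the flux degeneracy plays no role for $k_{\bar\alpha}^{+}$, since that state lives on the increasing branch $[u_M^+(x),\infty)$ where \descref{A4}{A-4} is in force; even in the borderline case $\bar\alpha=0$ the defining identities collapse to $k_{\bar\alpha}^{\pm}=u_M^{\pm}$, so the bounds persist. This mirrors Lemma~3.1 of \cite{GJT_2019}, the only modification being the appeal to \descref{A4}{A-4} in place of the non-degeneracy assumption used there.
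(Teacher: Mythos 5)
Your argument is correct and is essentially the paper's own: the paper simply defers to a slight modification of Lemma~3.1 of \cite{GJT_2019}, and that modification is precisely your appeal to the uniform coercivity \descref{A4}{A-4} on the branch $[u_M^+(x),\infty)$ (inverting $\gamma$ to get the $x$-independent bound $\gamma^{-1}(\bar\alpha)$) together with the boundedness of $u_M^{\pm}$ from \descref{B3}{B-3}. No gaps.
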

Let 
\begin{eqnarray}\label{def_B_L1}
	\begin{array}{lll}
	\mM = \max\left(\sup_{x \in \R} \abs{k_{\bar{\alpha}}^{-}(x)}, 
	\sup_{x \in \R} \abs{k_{\bar{\alpha}}^{+}(x)}\right), \quad \bar{\eta}=\sup\{\eta(u);\,\abs{u}\leq \mM\},\\
	L =\sup\{\abs{\partial_u A(x,u)}:\abs{u} \le \mM, x \in \R \}.
	\end{array}
\end{eqnarray}
Note that by Assumption \descref{B2}{B-2}, $L< \infty$. Since $\eta$ is continuous we have $\bar{\eta}<\f$.
Also, by \eqref{define_k} we have 
\begin{equation}\label{u_M_bounds}
\textrm{$k^-_{\bar{\alpha}}(x) \le u_M(x)^- \le u_M(x) \le u_M(x)^+ \le k^+_{\bar{\alpha}}(x)$ for all $x \in \R$},
\end{equation}
implying that $\norm{u_M}_{\infty},  \norm{u_M^{\pm}}_{\infty} \le \mM$.
For the convergence analysis that follows we assume that $\D :=(\Delta x,\Delta t)\rightarrow 0$ with the
	ratio $\lambda = \D t / \D x$ fixed and satisfying the CFL condition
	\begin{equation}\label{CFL}
	\lambda L \le 1/2.
	\end{equation}

Lemmas \ref{lemma:k_stationary_A}, \ref{lemma_A_scheme_monotone}, \ref{lemma:A_u_bound} and
\ref{lemma_time_continuity} below are, respectively, Lemmas 3.4, 3.5, 3.6 and 3.8 of \cite{GJT_2019}. The proofs
appearing in \cite{GJT_2019} apply equally well here.
\begin{lemma}\label{lemma:k_stationary_A}
	The grid functions $\{ k_{\bar{\alpha}}^{-}(x_j)\}_{j \in \Z}$ 
	and $\{ k_{\bar{\alpha}}^{+}(x_j)\}_{j \in \Z}$
	are stationary solutions of the difference scheme.
\end{lemma}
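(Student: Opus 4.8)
The plan is to verify directly that inserting the grid function $u_j^n = k_{\bar{\alpha}}^{+}(x_j)$ (respectively $k_{\bar{\alpha}}^{-}(x_j)$) into the scheme \eqref{scheme_A} reproduces the same values at time level $n+1$. Since the update reads $u_j^{n+1} = u_j^n - \lambda \D_- \bar{A}(u_j^n,u_{j+1}^n,x_j,x_{j+1})$, a grid function is stationary precisely when the associated interface fluxes $\bar{A}(u_{j-1},u_j,x_{j-1},x_j)$ are independent of $j$, so that $\D_-$ annihilates them. Thus the whole lemma reduces to showing that the generalized Godunov flux \eqref{def_bar_A_direct}, evaluated along each of these two grid functions, takes one and the same value at every cell interface.

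First I would treat the $k_{\bar{\alpha}}^{+}$ case and resolve the inner clippings in \eqref{def_bar_A_direct} using the ordering \eqref{u_M_bounds}. Because $k_{\bar{\alpha}}^{+}(x_j) \ge u_M^+(x_j) \ge u_M(x_j)$, the inner maximum collapses to $\max(k_{\bar{\alpha}}^{+}(x_j),u_M(x_j)) = k_{\bar{\alpha}}^{+}(x_j)$, and then \eqref{define_k} gives $A(x_j,k_{\bar{\alpha}}^{+}(x_j)) = \bar{\alpha}$ for the left contribution. For the right contribution, $k_{\bar{\alpha}}^{+}(x_{j+1}) \ge u_M(x_{j+1})$ forces $\min(k_{\bar{\alpha}}^{+}(x_{j+1}),u_M(x_{j+1})) = u_M(x_{j+1})$, and since $u_M(x_{j+1}) \in [u_M^-(x_{j+1}),u_M^+(x_{j+1})]$, assumption \descref{A3}{A-3} yields $A(x_{j+1},u_M(x_{j+1})) = 0$. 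Hence the interface flux equals $\max(\bar{\alpha},0)$. The $k_{\bar{\alpha}}^{-}$ case is the mirror image: now the left term clips to $u_M(x_j)$ and contributes $0$, while the right term clips to $k_{\bar{\alpha}}^{-}(x_{j+1})$ and contributes $\bar{\alpha}$, again giving $\max(0,\bar{\alpha})$.

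To finish I would note that $\bar{\alpha} \ge 0$, which follows from $A(x,\cdot) \ge 0$ (a consequence of \descref{A3}{A-3} and \descref{A4}{A-4}), so that $\max(\bar{\alpha},0) = \bar{\alpha}$ in both computations. Therefore the interface flux equals the $j$-independent constant $\bar{\alpha}$ everywhere, $\D_-$ of it vanishes, and the scheme leaves both grid functions unchanged. There is no genuine obstacle here: the only points requiring care are checking that each $\max$/$\min$ clipping resolves in the claimed direction (which is exactly what the chain of inequalities in \eqref{u_M_bounds} supplies) and confirming $\bar{\alpha} \ge 0$. The statement is then a direct verification, consistent with its being a minor modification of Lemma 3.4 of \cite{GJT_2019}.
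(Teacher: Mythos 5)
Your proof is correct and is essentially the argument the paper relies on (the paper simply defers to Lemma 3.4 of \cite{GJT_2019}, whose proof is exactly this direct computation): the clippings in \eqref{def_bar_A_direct} resolve via \eqref{u_M_bounds} so that each interface flux equals $\max(\bar{\alpha},0)=\bar{\alpha}$, a $j$-independent constant annihilated by $\D_-$. No gaps.
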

\begin{lemma}\label{lemma_A_scheme_monotone}
	The scheme is monotone, meaning that
	if $ \abs{v_j^n}, \abs{w_j^n} \le \mM$ for $ j \in \Z$, then
	\begin{equation*}
	v_j^n \le w_j^n, \quad j \in \Z
	\implies
	v_j^{n+1} \le w_j^{n+1},  \quad j \in \Z.
	\end{equation*}
\end{lemma}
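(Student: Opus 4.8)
The plan is to exhibit the one–step update of \eqref{scheme_A} as an order–preserving map and then compose inequalities coordinate by coordinate. Write
\[
u_j^{n+1} = H(u_{j-1}^n,u_j^n,u_{j+1}^n) := u_j^n - \la\big[\bar A(u_j^n,u_{j+1}^n,x_j,x_{j+1}) - \bar A(u_{j-1}^n,u_j^n,x_{j-1},x_j)\big].
\]
It suffices to show that, on $[-\mM,\mM]^3$, the map $H$ is nondecreasing separately in each of its three arguments. Granting this, if $v_j^n\le w_j^n$ for all $j$ then changing the three slots one at a time (the intermediate arguments remaining sandwiched in $[-\mM,\mM]$) gives $v_j^{n+1}=H(v_{j-1}^n,v_j^n,v_{j+1}^n)\le H(w_{j-1}^n,w_j^n,w_{j+1}^n)=w_j^{n+1}$, which is the assertion.

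The key preliminary step is to record two facts about the Godunov flux $\bar A$ of \eqref{def_bar_A_direct}. First, $u\mapsto\bar A(u,v,x_j,x_{j+1})$ is nondecreasing: it is the maximum of the term $A(x_{j+1},\min(v,u_M(x_{j+1})))$, constant in $u$, and the term $A(x_j,\max(u,u_M(x_j)))$, which is nondecreasing in $u$ because $u\mapsto\max(u,u_M(x_j))$ is nondecreasing with values in $[u_M(x_j),\infty)$, and $A(x_j,\cdot)$ is nondecreasing on that half–line (it vanishes on $[u_M^-(x_j),u_M^+(x_j)]$ and increases beyond $u_M^+(x_j)$, by \descref{A3}{A-3}). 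Symmetrically, $v\mapsto\bar A(u,v,x_j,x_{j+1})$ is nonincreasing, since $A(x_{j+1},\cdot)$ is nonincreasing on $(-\infty,u_M(x_{j+1})]$. Second, both of these one–variable maps are $L$–Lipschitz on $[-\mM,\mM]$: the truncations $\max(u,\cdot)$, $\min(v,\cdot)$ are $1$–Lipschitz and keep their arguments in $[-\mM,\mM]$ since $\norm{u_M}_\infty\le\mM$ by \eqref{u_M_bounds}; $A(x,\cdot)$ is $L$–Lipschitz there by \eqref{def_B_L1}; and a maximum of $L$–Lipschitz functions is $L$–Lipschitz. Concretely, for $u\le u'$ one has $0\le\bar A(u',v,\cdot)-\bar A(u,v,\cdot)\le L(u'-u)$, and for $v\le v'$ one has $-L(v'-v)\le\bar A(u,v',\cdot)-\bar A(u,v,\cdot)\le0$.

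Monotonicity of $H$ in the two off–diagonal slots is then immediate: $u_{j-1}$ enters only through $+\la\,\bar A(u_{j-1},u_j,\cdot)$, nondecreasing in its left argument, and $u_{j+1}$ enters only through $-\la\,\bar A(u_j,u_{j+1},\cdot)$, nondecreasing in $u_{j+1}$ because $\bar A$ is nonincreasing in its right argument. The diagonal slot $u_j$ is the only one requiring work, since it occurs simultaneously as the right state of the flux across the interface between cells $j-1$ and $j$ and as the left state of the flux across the interface between cells $j$ and $j+1$. For $u_j\le u_j'$ (the other two fixed), combining the two Lipschitz estimates above shows that the outgoing–flux term contributes at least $-\la L(u_j'-u_j)$ and the incoming–flux term likewise at least $-\la L(u_j'-u_j)$, whence
\[
H(u_{j-1},u_j',u_{j+1})-H(u_{j-1},u_j,u_{j+1})\ \ge\ (1-2\la L)(u_j'-u_j)\ \ge\ 0,
\]
the final inequality being exactly the CFL condition \eqref{CFL}. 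This diagonal estimate — and in particular the appearance of the factor $2\la L$, forcing the bound $\la L\le 1/2$ rather than $\la L\le 1$ — is the main point; the remaining verifications are routine bookkeeping.
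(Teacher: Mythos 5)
Your proof is correct and is essentially the standard argument that the paper relies on (the paper itself does not reprint a proof but defers to Lemma 3.5 of \cite{GJT_2019}, which proceeds exactly this way): one checks that the Godunov flux \eqref{def_bar_A_direct} is nondecreasing in its left state and nonincreasing in its right state, and then uses the Lipschitz bound $L$ together with the CFL condition \eqref{CFL} to handle the diagonal argument of the update map. Your observation that the factor $2\la L$ is what forces $\la L\le 1/2$ is exactly the point of the paper's choice of CFL constant.
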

\begin{lemma}\label{lemma:A_u_bound}
                             The Godunov approximations are bounded:
	\begin{equation}\label{u_bounded}
	\abs{u_j^n} \le \mM, \quad j \in \Z, n \ge 0.
	\end{equation}
\end{lemma}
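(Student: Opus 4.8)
The plan is to prove the bound by a comparison argument, using the two spatially-varying stationary grid functions $\{k_{\bar{\alpha}}^{-}(x_j)\}$ and $\{k_{\bar{\alpha}}^{+}(x_j)\}$ as sub- and super-solutions and propagating the comparison forward in time via the monotonicity of the scheme. The three structural facts I would invoke are: that these grid functions are stationary solutions of the difference scheme (Lemma~\ref{lemma:k_stationary_A}); that the scheme preserves the ordering of grid functions that are bounded by $\mM$ (Lemma~\ref{lemma_A_scheme_monotone}); and that $\abs{k_{\bar{\alpha}}^{\pm}(x_j)} \le \mM$ for every $j \in \Z$, which is immediate from the definition of $\mM$ in \eqref{def_B_L1}.

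First I would establish the base case, namely that the initial grid data is squeezed between the two stationary states,
\[
k_{\bar{\alpha}}^{-}(x_j) \le u_j^0 \le k_{\bar{\alpha}}^{+}(x_j), \quad j \in \Z.
\]
This is where the unimodal-with-flat-region structure of the flux enters. By the definition \eqref{A_2} of $\bar{\alpha}$ we have $A(x_j,u_0(x_j)) \le \bar{\alpha}$. Using Assumption \descref{A3}{A-3}, the map $u \mapsto A(x_j,u)$ is decreasing on $(-\f,u_M^-(x_j)]$, vanishes on $[u_M^-(x_j),u_M^+(x_j)]$, and is increasing on $[u_M^+(x_j),\f)$. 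Combined with the normalisations $A(x_j,k_{\bar{\alpha}}^{\pm}(x_j)) = \bar{\alpha}$ from \eqref{define_k}, the sublevel set $\{u : A(x_j,u) \le \bar{\alpha}\}$ is exactly the interval $[k_{\bar{\alpha}}^{-}(x_j),k_{\bar{\alpha}}^{+}(x_j)]$. Since $u_j^0 = u_0(x_j)$ lies in this sublevel set, the displayed inequality follows, and in particular $\abs{u_j^0} \le \mM$.

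Finally I would run the induction on $n$. Suppose $\abs{u_j^n} \le \mM$ for all $j \in \Z$, which holds at $n=0$. Then the pairs $(k_{\bar{\alpha}}^{-}(x_j),u_j^n)$ and $(u_j^n,k_{\bar{\alpha}}^{+}(x_j))$ each consist of grid functions bounded by $\mM$, so Lemma~\ref{lemma_A_scheme_monotone} applies to both. Since the stationary states are unchanged under one step of the scheme by Lemma~\ref{lemma:k_stationary_A}, the ordering $k_{\bar{\alpha}}^{-}(x_j) \le u_j^n \le k_{\bar{\alpha}}^{+}(x_j)$ propagates to $k_{\bar{\alpha}}^{-}(x_j) \le u_j^{n+1} \le k_{\bar{\alpha}}^{+}(x_j)$, whence $\abs{u_j^{n+1}} \le \mM$. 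This closes the induction and yields \eqref{u_bounded}. The one point requiring genuine care is the base case: one must confirm that $\bar{\alpha}$ is chosen large enough that the initial data falls inside the sublevel interval $[k_{\bar{\alpha}}^{-},k_{\bar{\alpha}}^{+}]$ at every grid point simultaneously—this is precisely why $\bar{\alpha}$ is defined as a supremum over all $x$ in \eqref{A_2}—and that the identification of this interval as the full sublevel set holds uniformly in $x$, which rests entirely on the monotonicity structure of \descref{A3}{A-3}. The inductive propagation itself is then routine, given monotonicity and the stationarity of $k_{\bar{\alpha}}^{\pm}$.
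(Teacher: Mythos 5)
Your proposal is correct and follows exactly the argument the paper intends: the paper defers the proof to Lemma 3.6 of \cite{GJT_2019}, which is precisely this comparison argument — sandwich the initial data between $k_{\bar{\alpha}}^{-}$ and $k_{\bar{\alpha}}^{+}$ using the definition of $\bar{\alpha}$ as a supremum and the monotonicity structure of \descref{A3}{A-3}, then propagate the ordering by induction via Lemma~\ref{lemma:k_stationary_A} (stationarity) and Lemma~\ref{lemma_A_scheme_monotone} (monotonicity). Your identification of the base case as the only delicate point is also apt.
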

\begin{lemma}\label{lemma_time_continuity}
	The following time continuity estimate holds for $u_j^n$:
	\begin{equation}\label{time_continuity_est_u}
	\sumj \abs{u_j^{n+1}-u_j^n} \le 2 \lambda (\bar{\eta} \TV(a) +L \TV(u_0) + L \TV(u_M)).
	\end{equation}
\end{lemma}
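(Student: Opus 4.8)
The plan is to reduce the estimate for an arbitrary time step to the first step, and then to bound the first-step increment by the spatial variation of the numerical flux. First I would observe that the scheme defines a single map $S$ with $u^{n+1}=S(u^n)$, and that $S$ is independent of $n$ because the spatial coefficients $x_j$ are fixed. All iterates satisfy $\abs{u_j^n}\le\mM$ by Lemma~\ref{lemma:A_u_bound}, so the monotonicity furnished by Lemma~\ref{lemma_A_scheme_monotone} is available; since the scheme is in conservation form it is also sum-preserving on grid functions with summable differences, and hence (Crandall--Tartar) an $\ell^1$-contraction. Applying the contraction to the pair $u^n,u^{n-1}$ gives $\sumj\abs{u_j^{n+1}-u_j^n}=\sumj\abs{S(u^n)_j-S(u^{n-1})_j}\le\sumj\abs{u_j^n-u_j^{n-1}}$, and iterating down to $n=0$ reduces the claim to bounding $\sumj\abs{u_j^1-u_j^0}$. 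The differences $u^{n+1}-u^n$ have compact support: wherever $u_0=u_M$ (so outside a compact set) the one-sided flux values vanish by \descref{A3}{A-3}, whence $u_j^1=u_j^0$ there, and the discrete finite speed of propagation keeps the support finite for all $n$, so every $\ell^1$ sum involved is finite.

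Next I would estimate the first step. From the scheme $u_j^1-u_j^0=-\la\,(\bar{A}_j-\bar{A}_{j-1})$ with $\bar{A}_j=\bar{A}(u_j^0,u_{j+1}^0,x_j,x_{j+1})$, so $\sumj\abs{u_j^1-u_j^0}=\la\sumj\abs{\bar{A}_j-\bar{A}_{j-1}}$. Introducing the one-sided flux values $f_j^R:=A(x_j,\max(u_j^0,u_M(x_j)))$ and $f_j^L:=A(x_j,\min(u_j^0,u_M(x_j)))$, the Godunov flux reads $\bar{A}_j=\max(f_j^R,f_{j+1}^L)$; using $\abs{\max(a,b)-\max(c,d)}\le\abs{a-c}+\abs{b-d}$ and telescoping, one finds $\sumj\abs{\bar{A}_j-\bar{A}_{j-1}}\le\TV(\{f_j^R\})+\TV(\{f_j^L\})$. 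To bound each of these, I would split a consecutive difference, say of $f^R$, into an $x$-increment and a $u$-increment by adding and subtracting $A(x_j,M_{j+1})$ with $M_j:=\max(u_j^0,u_M(x_j))$; the $x$-increment is controlled by \descref{B2}{B-2} together with $\abs{M_j}\le\mM$ (so that $\eta(M_j)\le\bar{\eta}$), and the $u$-increment by the Lipschitz bound \descref{A2}{A-2} with constant $L$ and $\abs{M_{j+1}-M_j}\le\abs{u_{j+1}^0-u_j^0}+\abs{u_M(x_{j+1})-u_M(x_j)}$. Summing and using $\sumj\abs{a(x_{j+1})-a(x_j)}\le\TV(a)$ and the analogous bounds for $u_0$ and $u_M$ yields $\TV(\{f_j^R\})\le\bar{\eta}\TV(a)+L\TV(u_0)+L\TV(u_M)$, and identically for $f^L$, producing the factor $2\la$ in the final bound.

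The main obstacle is the first-step estimate, specifically arranging the $\max$/$\min$ structure of the Godunov flux so that the telescoping produces honest total variations of $\{f_j^R\}$ and $\{f_j^L\}$ rather than uncontrolled cross terms, and confirming that the intermediate states $M_j$ remain in $[-\mM,\mM]$ so that both $\eta(M_j)\le\bar{\eta}$ and the Lipschitz constant $L$ may legitimately be invoked. By contrast the contraction-and-propagation step is routine once monotonicity (Lemma~\ref{lemma_A_scheme_monotone}) and conservation are in hand.
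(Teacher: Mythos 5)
Your proof is correct and follows essentially the same route as the paper's (which is imported from Lemma 3.8 of \cite{GJT_2019}): reduce to the first time step via the Crandall--Tartar $\ell^1$-contraction of the monotone, conservative marching map, then bound $\lambda\sumj\abs{\Delta_-\bar{A}}$ by splitting each Godunov flux difference into its $x$-increment (controlled by \descref{B2}{B-2} and $\bar{\eta}$) and its $u$-increment (controlled by $L$ and the variations of $u_0$ and $u_M$), which is exactly where the factor $2\lambda$ and the three total-variation terms come from. Your attention to the summability of $u^n-u^{n-1}$ via the compact support of $u_0-u_M$ and the vanishing of the flux on $[u_M^-,u_M^+]$ is the right justification for invoking the contraction.
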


\begin{lemma}\label{lemma_incremental}
	Suppose that the mapping $x\mapsto A(x,u)$ is constant on $[x_{j-1},x_{j+1}]$, i.e.,
	$[x_{j-1},x_{j+1}] \cap \Omega = \emptyset$. Then the Godunov scheme can be
	written in incremental form:
	\begin{equation}\label{inc_form}
	u_j^{n+1} = u_j^n + C_{\jph}^n \D_+ u_j^n - D_{\jmh}^n \D_- u_j^n,
	\end{equation}
	where 
	\begin{equation}\label{coeff_bounds}
	C_{\jph}^n, D_{\jmh}^n \in [0,1], \quad C_{\jph}^n+ D_{\jph}^n \le 1.
	\end{equation}
\end{lemma}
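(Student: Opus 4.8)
The plan is to reduce the scheme, under the hypothesis $[x_{j-1},x_{j+1}]\cap\Omega=\emptyset$, to the classical single-flux Godunov scheme and then apply the standard Harten-type decomposition. First I would observe that since $x\mapsto A(x,u)$ is piecewise constant by Assumption \descref{B1}{B-1} and $[x_{j-1},x_{j+1}]$ avoids $\Omega$, the flux $A(x,\cdot)$ agrees at $x_{j-1},x_j,x_{j+1}$; write $f(\cdot):=A(x_j,\cdot)$. Because $u_M^{\pm}$ are determined by $f$ through Assumption \descref{A3}{A-3} (equivalently the Remark after Definition~\ref{def:states}), the value $u_M(x)$ is likewise constant on $[x_{j-1},x_{j+1}]$; call it $\bar{u}_M$. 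Consequently both interface fluxes entering the update at cell $j$ collapse to the single-flux expression
\[
g(u,v):=\max\{f(\max(u,\bar{u}_M)),\,f(\min(v,\bar{u}_M))\},
\]
so that $u_j^{n+1}=u_j^n-\la[g(u_j^n,u_{j+1}^n)-g(u_{j-1}^n,u_j^n)]$, the classical Godunov scheme for $f$.

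Next I would carry out the add-and-subtract trick: inserting $\pm g(u_j^n,u_j^n)$ and dividing by the appropriate increments yields the incremental form \eqref{inc_form} with
\[
C_{\jph}^n=-\la\,\frac{g(u_j^n,u_{j+1}^n)-g(u_j^n,u_j^n)}{\D_+ u_j^n},\qquad
D_{\jmh}^n=\la\,\frac{g(u_j^n,u_j^n)-g(u_{j-1}^n,u_j^n)}{\D_- u_j^n},
\]
with the convention that a coefficient is set to $0$ when the corresponding increment vanishes. Shifting the index in the second formula produces $D_{\jph}^n=\la[g(u_{j+1}^n,u_{j+1}^n)-g(u_j^n,u_{j+1}^n)]/\D_+ u_j^n$, which is well defined using only flux values at $x_j$ and $x_{j+1}$, both lying in $[x_{j-1},x_{j+1}]$.

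To establish \eqref{coeff_bounds} I would invoke two properties of $g$. The monotonicity of the Godunov flux, namely that $g$ is non-decreasing in its first argument and non-increasing in its second (this is exactly the mechanism behind Lemma~\ref{lemma_A_scheme_monotone}), forces $g(u_j^n,u_{j+1}^n)-g(u_j^n,u_j^n)$ to carry the sign opposite to $\D_+ u_j^n$ and $g(u_{j+1}^n,u_{j+1}^n)-g(u_j^n,u_{j+1}^n)$ to share the sign of $\D_+ u_j^n$; hence $C_{\jph}^n\ge 0$ and $D_{\jph}^n\ge 0$ (and likewise $D_{\jmh}^n\ge 0$). For the upper bound I would note that $g$ inherits the Lipschitz constant $L$ of $f$ in each argument, since $u\mapsto\max(u,\bar{u}_M)$ and $v\mapsto\min(v,\bar{u}_M)$ are $1$-Lipschitz and the maximum of two $L$-Lipschitz functions is $L$-Lipschitz. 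Therefore $0\le C_{\jph}^n\le\la L$ and $0\le D_{\jph}^n\le\la L$, and adding these and invoking the CFL condition \eqref{CFL} gives $C_{\jph}^n+D_{\jph}^n\le 2\la L\le 1$; in particular each coefficient lies in $[0,1]$.

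The main obstacle I anticipate is structural rather than analytic: one must confirm that the degeneracy, i.e.\ the flat region $[u_M^-,u_M^+]$ on which $f$ vanishes, spoils neither the monotonicity nor the Lipschitz property of $g$. Monotonicity survives because the clipping at $\bar{u}_M\in[u_M^-,u_M^+]$ exactly decouples the decreasing branch $(-\infty,u_M^-]$ from the increasing branch $[u_M^+,\infty)$, so that $f(\max(\cdot,\bar{u}_M))$ is non-decreasing and $f(\min(\cdot,\bar{u}_M))$ is non-increasing despite the flat interval; and the Lipschitz bound survives because flattening $f$ on the degenerate interval only diminishes difference quotients. Once these two facts are in place, the coefficient bounds follow immediately.
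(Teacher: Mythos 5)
Your proposal is correct and follows essentially the same route as the paper: both reduce the two interface fluxes to the classical Godunov flux of the single function $A(x_j,\cdot)$ (valid since $[x_{j-1},x_{j+1}]$ avoids $\Omega$ and $x\mapsto A(x,u)$ is piecewise constant), perform the standard add-and-subtract of $\bar{A}(u_j^n,u_j^n,x_j,x_j)$ to obtain the incremental coefficients, and deduce nonnegativity from the monotonicity of the Godunov flux and the upper bound from the Lipschitz constant $L$ together with the CFL condition $\lambda L\le 1/2$. The only difference is that you spell out the verification of monotonicity and Lipschitz continuity of the clipped flux in the degenerate case, which the paper leaves implicit.
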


\begin{proof}
It is readily verified that the marching formula \eqref{scheme_A} can be put in the form \eqref{inc_form} where
\begin{equation}
\begin{split}
&C_{\jph}^n = 
\begin{cases}
- \lambda {\bar{A}(u_j^n,u_{j+1}^n,x_j,x_j) - \bar{A}(u_j^n,u_{j}^n,x_j,x_j)  \over  u_{j+1}^n - u_{j}^n}, \quad & u_{j+1}^n - u_{j}^n \neq 0,\\
0, \quad & u_{j+1}^n - u_{j}^n = 0,
\end{cases}\\
&D_{\jmh}^n =
\begin{cases}
\lambda {\bar{A}(u_j^n,u_{j}^n,x_j,x_j) - \bar{A}(u_{j-1}^n,u_{j}^n,x_j,x_j)  \over  u_{j}^n - u_{j-1}^n}, \quad & u_{j}^n - u_{j-1}^n \neq 0,\\
0, \quad & u_{j}^n - u_{j-1}^n = 0.
\end{cases}
\end{split}
\end{equation}
Referring to \eqref{def_bar_A_direct}, it is clear that $C_{\jph}^n, D_{\jmh}^n \ge 0$. From the CFL condition
\eqref{CFL}, we have $\abs{C_{\jph}^n}, \abs{D_{\jmh}^n} \le 1/2$. Thus \eqref{coeff_bounds} holds.
\end{proof}

\begin{lemma}\label{lemma_compactness}
	Fix $T>0$. The Godunov approximations $u^{\D}$ converge (along a subsequence) 
	in $L^1(Q)$ and boundedly a.e. in $Q$ to some 
	$u \in L^{\infty}(Q) \cap C([0,T]:L^1_{\loc}(\R))$.
\end{lemma}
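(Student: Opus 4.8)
The plan is to establish compactness through the Kolmogorov--Riesz--Fr\'echet criterion applied on $(0,T)\times K$ for compact $K\subset\R$, using the three uniform-in-$\D$ estimates currently available: the $L^\infty$ bound $\abs{u_j^n}\le\mM$ (Lemma~\ref{lemma:A_u_bound}), the temporal $L^1$-continuity estimate (Lemma~\ref{lemma_time_continuity}), and a \emph{spatial} variation bound valid away from $\Omega$, extracted from the incremental form (Lemma~\ref{lemma_incremental}). The obstacle that shapes the whole argument is that no global spatial $\BV$ bound can hold --- the variation genuinely blows up at the accumulation points of $\Omega$ --- so compactness must be assembled from local control away from $\Omega$ together with the fact that $\abs{\Omega}=0$.

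The temporal translation estimate is immediate: summing Lemma~\ref{lemma_time_continuity} over the time levels between $t$ and $t+\tau$ gives $\int_K\abs{u^\D(x,t+\tau)-u^\D(x,t)}\,dx\le C(\tau+\D t)$ with $C$ independent of $\D$. For the spatial part I would first decompose $\Omega=\Omega'\cup(\Omega\setminus\Omega')$, where $\Omega'$ is the (closed, measure-zero) set of accumulation points of $\Omega$. On any compact set disjoint from $\Omega'$ there are only finitely many flux discontinuities, since an infinite subset of $\Omega$ in such a compact set would accumulate there, forcing a point of $\Omega'$ into it. Between consecutive discontinuities the mapping $x\mapsto A(x,\cdot)$ is a single fixed function (piecewise constant and continuous on an interval is constant), so by Lemma~\ref{lemma_incremental} the scheme there is exactly a classical monotone Godunov scheme and the spatial differences are not amplified; this is the source of the uniform variation bound away from $\Omega$.

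With this in hand I would prove the spatial translation estimate by splitting $K$. Given $\eps>0$, choose an open $G\supset\Omega'$ with $\abs{G}$ so small that $2\mM\abs{G}<\eps/2$ (possible since $\Omega'$ is closed of measure zero, so the measure of its neighborhoods tends to $0$). On $K\setminus G$ the variation bound of the previous step yields $\int_{K\setminus G}\abs{u^\D(x+\rho,t)-u^\D(x,t)}\,dx\le C_{K,G}\abs{\rho}$, while on $K\cap G$ and on the thin strips of width $\abs{\rho}$ swept across $\partial G$ I would use only $\abs{u^\D}\le\mM$ to bound the contribution by $2\mM\abs{G}+C'\abs{\rho}$. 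Choosing first $G$ (hence $C_{K,G}$) and then $\rho$ small makes the total less than $\eps$, uniformly in $\D$. The Kolmogorov--Riesz--Fr\'echet theorem then gives precompactness of $\{u^\D\}$ in $L^1((0,T)\times K)$; a diagonal argument over an exhausting sequence of compacta $K$ extracts a subsequence converging in $L^1_{\loc}(Q)$, and, after passing to a further subsequence, boundedly a.e.\ in $Q$ thanks to $\abs{u^\D}\le\mM$.

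Finally, the $L^\infty$ bound passes to the limit to give $u\in L^\infty(Q)$ with $\norm{u}_\infty\le\mM$, and the uniform temporal modulus of continuity from Lemma~\ref{lemma_time_continuity} survives in the limit, yielding $u\in C([0,T];L^1_{\loc}(\R))$. The main obstacle, as flagged above, is the spatial estimate near $\Omega'$: since the spatial variation is truly unbounded around accumulation points, one cannot rely on a single global $\BV$ bound, and the argument succeeds only because the uncontrolled behaviour is confined to the measure-zero set $\Omega'$, whose neighborhoods can be made negligible against the uniform $L^\infty$ bound. This confinement is exactly the $\BV_{\loc}$ compactness mechanism of \cite{BGKT} that replaces the singular-mapping method of \cite{GJT_2019}.
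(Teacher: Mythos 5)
There is a genuine gap, and it sits exactly where you place the heart of the argument: the claimed uniform variation bound on compact sets disjoint from the accumulation set $\Omega'$. Excising only $\Omega'$ leaves finitely many \emph{isolated} flux discontinuities inside $K\setminus G$, and no uniform-in-$\D$ bound on $\TV_{K\setminus G}(u^{\D}(\cdot,t))$ holds across such points. The example with flux $A(x,u)=H(-x)u^2+H(x)\abs{u}$, recalled in Section~\ref{sec_BV} from \cite{ADGG}, makes this concrete: that flux satisfies every hypothesis of Section~\ref{sec_godunov} (it is piecewise constant in $x$ with $\Omega=\{0\}$, so $\Omega'=\emptyset$, and $u_M^{\pm}\equiv 0$), yet for suitable $u_0\in\BV(\R)$ the total variation of the solution blows up in finite time, the infinite variation accumulating at the single isolated discontinuity $x=0$. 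By lower semicontinuity of the total variation under $L^1$ convergence, $\TV_{[-1,1]}(u^{\D}(\cdot,t))$ cannot then be bounded uniformly in $\D$, even though $[-1,1]$ is disjoint from $\Omega'=\emptyset$. So the set you must excise is a small-measure neighborhood of \emph{all} of $\Omega$ (still possible, since $\Omega$ is closed of measure zero), not merely of its accumulation points: isolated flux discontinuities are just as destructive of $\BV$ bounds as accumulating ones.

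The second, related, problem is that even on compact sets at positive distance from all of $\Omega$, the justification ``between consecutive discontinuities the scheme is a classical monotone Godunov scheme, so the spatial differences are not amplified'' does not by itself yield the needed bound. Harten's lemma applied to a sub-window $\{j_1,\dots,j_2\}$ of an interval of flux continuity produces boundary terms of size up to $O(\mM)$ \emph{per time step}, coming from the differences $\Delta_+u^n_{j_2}$ and $\Delta_-u^n_{j_1}$ just outside the window; over the $O(T/\D t)$ time steps these accumulate to something unbounded as $\D\rightarrow 0$ unless controlled by an additional mechanism. Supplying that mechanism --- via the time-continuity estimate of Lemma~\ref{lemma_time_continuity}, which bounds the spatial variation of the numerical flux and hence the influx of variation through the window's boundary --- is precisely the nontrivial content of the compactness portion of Theorem 4.2 of \cite{BGKT}, which the paper applies on each component interval of $\R\setminus\Omega$ before diagonalizing over the (countably many) components. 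Your overall architecture (local control away from a measure-zero bad set, $L^{\infty}$ plus smallness of its neighborhoods to absorb the rest, Kolmogorov--Riesz and a diagonal extraction) is sound and parallels the paper's, but as written the proposal anchors the key estimate to the wrong exceptional set and replaces the one step that actually requires proof with an assertion.
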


\begin{proof}
Since $\R \setminus \Omega$ is open, it is the union of a countable set of disjoint open intervals,  
$\R \setminus \Omega = \cup_{m=1}^{\infty} (a_m,b_m)$. First consider the spatial interval $(a_1,b_1)$.
Due to Lemmas \ref{lemma:A_u_bound}, \ref{lemma_time_continuity} and \ref{lemma_incremental}, we can repeat the proof
of the compactness portion of Theorem 4.2 of \cite{BGKT}. From this we conclude that  
$u^{\D}$ converges (along a subsequence) in $L^1((0,T) \times (a_1,b_1))$ and boundedly
a.e. in $(0,T) \times (a_1,b_1)$. We can extract a further subsequence of this first subsequence that converges
 in $L^1((0,T) \times (a_2,b_2))$ and boundedly
a.e. in $(0,T) \times (a_2,b_2)$. Applying the Cantor diagonal process, we repeat this for $m=3, \ldots$, 
and then extract the subsequence
along the diagonal, which converges in  $(0,T) \times \cup (a_m,b_m)$. Since $\R \setminus \Omega$ has measure zero, the resulting subsequence converges
in $L^1(Q)$ and boundedly a.e. in $Q$.
The assertion that $u \in L^{\infty}(Q) \cap C([0,T]:L^1_{\loc}(\R))$ follows from Lemmas \ref{lemma:A_u_bound}
and \ref{lemma_time_continuity}.
\end{proof}
\noindent The following is Lemma 4.1 of \cite{GJT_2019}, which remains valid here.
\begin{lemma}\lab{entropy_lemma1_A}
	We have the following discrete entropy inequalities:
	\begin{equation}\label{ent_discrete_A}
	\abs{u^{n+1}_j- k^{\pm}_{\alpha,j}} 
	\le \abs{u_j^n - k^{\pm}_{\alpha,j}}
	- \lambda (\mF^n_{\jph} - \mF^n_{\jmh}),\mbox{ for all }j\in\Z,
	\end{equation}
	where
	\begin{equation*}
	\mathcal{F}^n_{\jph} = \bar{A}(u_j^n \vee k^{\pm}_{\alpha,j},u_{j+1}^n \vee k^{\pm}_{\alpha,j+1},x_j,x_{j+1})	
	                                 -  \bar{A}(u_j^n \wedge k^{\pm}_{\alpha,j},u_{j+1}^n \wedge k^{\pm}_{\alpha,j+1},x_{\Delta x},x_{j+1}).
	\end{equation*}
\end{lemma}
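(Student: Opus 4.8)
The plan is to run the classical Crandall--Majda argument for monotone conservative schemes, using two facts already at hand: the scheme is monotone (Lemma~\ref{lemma_A_scheme_monotone}) and the grid functions $k^{\pm}_{\alpha,j}:=k^{\pm}_{\alpha}(x_j)$ are stationary solutions of the scheme (Lemma~\ref{lemma:k_stationary_A}, whose proof applies verbatim for every height $\alpha\ge 0$, not only $\alpha=\bar{\alpha}$). First I would rewrite the marching formula \eqref{scheme_A} as a three-point update $u_j^{n+1}=H_j(u_{j-1}^n,u_j^n,u_{j+1}^n)$ with
\begin{equation*}
H_j(p,q,r)=q-\lambda\big[\bar{A}(q,r,x_j,x_{j+1})-\bar{A}(p,q,x_{j-1},x_j)\big].
\end{equation*}
Because the scheme is monotone, $H_j$ is nondecreasing in each of its three arguments (on the range where the CFL condition \eqref{CFL} is in force), and stationarity of $k^{\pm}_{\alpha,\cdot}$ reads $k^{\pm}_{\alpha,j}=H_j(k^{\pm}_{\alpha,j-1},k^{\pm}_{\alpha,j},k^{\pm}_{\alpha,j+1})$.

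Next I would exploit the identity $a\vee b-a\wedge b=|a-b|$ together with two one-sided comparisons. Applying $H_j$ to the three maxima and using monotonicity in each slot gives
\begin{equation*}
H_j\big(u_{j-1}^n\vee k^{\pm}_{\alpha,j-1},\,u_j^n\vee k^{\pm}_{\alpha,j},\,u_{j+1}^n\vee k^{\pm}_{\alpha,j+1}\big)\ \ge\ H_j(u_{j-1}^n,u_j^n,u_{j+1}^n)=u_j^{n+1},
\end{equation*}
while comparing the same left-hand side against the stationary data shows it is also $\ge k^{\pm}_{\alpha,j}$; hence it dominates $u_j^{n+1}\vee k^{\pm}_{\alpha,j}$. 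Symmetrically,
\begin{equation*}
H_j\big(u_{j-1}^n\wedge k^{\pm}_{\alpha,j-1},\,u_j^n\wedge k^{\pm}_{\alpha,j},\,u_{j+1}^n\wedge k^{\pm}_{\alpha,j+1}\big)\ \le\ u_j^{n+1}\wedge k^{\pm}_{\alpha,j}.
\end{equation*}
Subtracting the two displays and using the $|\cdot|$ identity bounds $|u_j^{n+1}-k^{\pm}_{\alpha,j}|$ above by the difference of these two $H_j$-values.

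Finally I would simply expand that difference using the definition of $H_j$. The two local terms combine into $u_j^n\vee k^{\pm}_{\alpha,j}-u_j^n\wedge k^{\pm}_{\alpha,j}=|u_j^n-k^{\pm}_{\alpha,j}|$, and the four flux terms regroup into the value of $\bar{A}$ on maxima minus $\bar{A}$ on minima at $x_{\jph}$ and at $x_{\jmh}$; these are exactly $\mF^n_{\jph}$ and $\mF^n_{\jmh}$ as defined in the statement, so the flux contribution is $-\lambda(\mF^n_{\jph}-\mF^n_{\jmh})$, giving \eqref{ent_discrete_A}.

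I do not expect a genuine obstacle here, since this is the standard monotone-scheme computation; the one point requiring care is purely a matter of admissible range, namely that every argument fed to $H_j$ and $\bar{A}$ lies where Lemma~\ref{lemma_A_scheme_monotone} (equivalently, the CFL bound through $L$) applies. This is fine because $|u_j^n|\le\mM$ by Lemma~\ref{lemma:A_u_bound}, because $\|k^{\pm}_{\bar{\alpha}}\|_\infty\le\mM$ by \eqref{def_B_L1}, and because the max and min of in-range values stay in range; for heights $\alpha\le\bar{\alpha}$ one has $|k^{\pm}_{\alpha,j}|\le\mM$ by monotonicity of $\alpha\mapsto k^{\pm}_{\alpha}$, which is precisely the range of stationary states relevant to identifying the a.e.\ bounded limit $u$ as the adapted entropy solution. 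Verifying these inclusions and carrying out the regrouping of flux terms completes the proof.
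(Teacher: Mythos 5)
Your proposal is correct and is essentially the proof the paper relies on: the paper defers to Lemma~4.1 of \cite{GJT_2019}, whose argument is exactly this Crandall--Majda computation (monotone three-point update $H_j$, stationarity of $k^{\pm}_{\alpha,\cdot}$, comparison of $H_j$ applied to $\vee$ and $\wedge$ data, then regrouping the flux terms into $\mF^n_{\jph}-\mF^n_{\jmh}$). Your added care about the admissible range of the arguments (boundedness by $\mM$ and the remark that Lemma~\ref{lemma:k_stationary_A} holds for general heights $\alpha$, not just $\bar{\alpha}$) addresses the only points that genuinely need checking when transplanting that proof to the degenerate setting here.
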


	\noindent The following is (part of) Lemma 3.14 of \cite{GJT_2019}, which remains valid for the setup of this paper.
	\begin{lemma}\label{lemma_a_converge}
	Define
	$a^{\D}(x):=\sumj\chi_j(x)a(x_j)$.
	As $\D \rightarrow 0$, $a^{\D} \rightarrow a$ in $L^1_{\loc}(\R)$ and pointwise a.e.
\end{lemma}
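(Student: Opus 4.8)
The plan is to prove the two convergence modes separately, after fixing once and for all a definite $\BV$ representative of $a$ (for instance the right-continuous one) so that the sampled values $a(x_j)$ are unambiguous. This is the only genuine subtlety in the statement: the sampling points $\{x_j\}_{j\in\Z}$ form a set of measure zero, on which a $\BV$ function may be modified without changing its $L^1$ class, so the pointwise values entering $a^{\D}$ must be pinned down by a choice of representative before the claim even makes sense.

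For the a.e.\ pointwise convergence I would use that a $\BV$ function has at most countably many discontinuities, hence is continuous at almost every $x_0\in\R$. Fixing such a continuity point $x_0$, for each $\D x>0$ there is a unique index $j$ with $x_0\in I_j$, and the corresponding grid point obeys $\abs{x_j-x_0}\le\D x/2\to 0$. Continuity of $a$ at $x_0$ then yields $a^{\D}(x_0)=a(x_j)\to a(x_0)$, which is exactly the desired convergence at $x_0$; since this holds off the countable discontinuity set, it holds a.e.

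For the $L^1_{\loc}$ convergence I would extract a quantitative $O(\D x)$ estimate directly from the total variation bound furnished by Assumption \descref{B2}{B-2}. On each cell $I_j$ the approximation $a^{\D}$ equals the constant $a(x_j)$, so for $x\in I_j$ both $a(x)$ and $a(x_j)$ are values of $a$ on $I_j$ and hence $\abs{a^{\D}(x)-a(x)}\le \sup_{x\in I_j}a(x)-\inf_{x\in I_j}a(x)$. Integrating over $I_j$ (whose length is $\D x$) and summing over the finitely many cells meeting a fixed interval $[-R,R]$ gives $\int_{-R}^{R}\abs{a^{\D}-a}\,dx\le \D x\sum_j\big(\sup_{x\in I_j}a(x)-\inf_{x\in I_j}a(x)\big)$. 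The remaining step, and the one meriting a little care, is to bound the sum of per-cell oscillations by $\TV(a)$: each oscillation is at most $\TV(a;I_j)$, and since the cells $\{I_j\}$ are disjoint these variations sum to at most $\TV(a)$. Thus $\int_{-R}^{R}\abs{a^{\D}-a}\,dx\le \D x\,\TV(a)\to 0$. Beyond the representative issue and this oscillation-to-variation passage I anticipate no serious obstacle, and the estimate in fact exhibits the explicit convergence rate $O(\D x)$.
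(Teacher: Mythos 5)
Your argument is correct. Note that the paper itself does not reproduce a proof of this lemma: it simply cites Lemma 3.14 of \cite{GJT_2019}, so there is no in-text argument to compare against line by line. Your two steps are exactly the standard ones and both are sound: a $\BV$ function has at most countably many discontinuities, so at a.e.\ point $x_0$ the sampled value $a(x_{j(\D x)})$ with $\abs{x_{j(\D x)}-x_0}\le \D x/2$ converges to $a(x_0)$; and the cellwise bound $\abs{a^{\D}(x)-a(x)}\le \TV(a;I_j)$ together with superadditivity of the variation over the disjoint cells gives $\int_{-R}^{R}\abs{a^{\D}-a}\,dx\le \D x\,\TV(a)$. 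The second step is actually a little stronger than what the lemma asserts, since it yields an explicit $O(\D x)$ rate rather than mere convergence; an alternative (and the one most commonly used in this literature) is to deduce $L^1_{\loc}$ convergence from the a.e.\ convergence plus the uniform bound $\norm{a^{\D}}_\infty\le\norm{a}_\infty$ via dominated convergence, which avoids the oscillation-to-variation bookkeeping but loses the rate. Your remark about fixing a pointwise representative is reasonable hygiene, though in this paper $a$ is given as an actual $\BV$ function $a:\R\to\R$ in Assumption \descref{B2}{B-2}, so its pointwise values (and hence $a^{\D}$) are already determined and both of your steps go through for that given representative.
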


\begin{lemma}\label{lemma_k_converge}
	Define $k_{\alpha,j}^{\pm} = k_{\alpha}^{\pm}(x_j)$, and let
	\begin{equation}\label{k_alpha}
	k_{\alpha}^{\pm,\D}(x) = \sumj \chi_j(x) k_{\alpha,j}^{\pm}.
	\end{equation}
	Then
	\begin{equation}
	\textrm{$k_{\alpha}^{\pm,\D}(x) \rightarrow k_{\alpha}^{\pm}(x)$
	in $L^1_{\loc}(\R)$ and pointwise a.e.}
	\end{equation}
\end{lemma}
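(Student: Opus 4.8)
The plan is to recognise $k_{\alpha}^{\pm,\D}$ as the piecewise constant grid sampling of the fixed function $k_{\alpha}^{\pm}$ and then to run exactly the argument already used for Lemma~\ref{lemma_a_converge}. Concretely, if I can show that $k_{\alpha}^{\pm}$ is continuous on $\R\setminus\Omega$ and is uniformly bounded, then the sampling converges pointwise a.e.\ (because $\Omega$ is closed and of measure zero), and the dominated convergence theorem upgrades this to convergence in $L^1_{\loc}(\R)$. So the whole lemma reduces to one regularity fact about $k_{\alpha}^{\pm}$, and the only genuinely new ingredient compared with Lemma~\ref{lemma_a_converge} is that fact.

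The main step, and the one I expect to be the only real obstacle, is the continuity of $x\mapsto k_{\alpha}^{\pm}(x)$ on $\R\setminus\Omega$. For $\alpha=0$ this is immediate, since $k_{0}^{\pm}=u_{M}^{\pm}$, which are continuous on $\R\setminus\Omega$ by \descref{A3}{A-3}. For $\alpha>0$ I would fix $x_0\in\R\setminus\Omega$ and a sequence $x_n\to x_0$; since $\Omega$ is closed we may assume $x_n\in\R\setminus\Omega$ for large $n$. The values $k_{\alpha}^{+}(x_n)$ lie in $[u_{M}^{+}(x_n),\mM]$ and are therefore bounded, so along any subsequence $k_{\alpha}^{+}(x_{n_k})\to\ell$ for some $\ell$. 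Passing to the limit in $A(x_{n_k},k_{\alpha}^{+}(x_{n_k}))=\alpha$ using the continuity of $A$ on $(\R\setminus\Omega)\times\R$ from \descref{A1}{A-1}, together with the continuity of $u_{M}^{+}$ from \descref{A3}{A-3}, yields $A(x_0,\ell)=\alpha$ and $\ell\ge u_{M}^{+}(x_0)$. Because $A(x_0,\cdot)$ is increasing on $[u_{M}^{+}(x_0),\f)$ by \descref{A3}{A-3}, the value $k_{\alpha}^{+}(x_0)$ is the unique point of that branch solving $A(x_0,\cdot)=\alpha$ (here the degeneracy forces one to confine attention to the monotone branch where $k_{\alpha}^{+}$ lives, which is exactly where the strict lower bound \descref{A4}{A-4} keeps $k_{\alpha}^{+}$ away from the flat region), whence $\ell=k_{\alpha}^{+}(x_0)$. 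Since every subsequential limit equals $k_{\alpha}^{+}(x_0)$ and the sequence is bounded, the whole sequence converges, which proves continuity; the argument for $k_{\alpha}^{-}$ is symmetric on $(-\f,u_{M}^{-}(x)]$.

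With continuity in hand I would finish routinely. Fix $x\in\R\setminus\Omega$; since $\Omega$ is closed there is $\delta>0$ with $(x-\delta,x+\delta)\cap\Omega=\emptyset$, and once $\D x<2\delta$ the grid point $x_j$ of the cell $I_j\ni x$ satisfies $\abs{x_j-x}\le\D x/2<\delta$, so $x_j\in\R\setminus\Omega$ and $x_j\to x$. Continuity then gives $k_{\alpha}^{\pm,\D}(x)=k_{\alpha}^{\pm}(x_j)\to k_{\alpha}^{\pm}(x)$, which is pointwise convergence at every $x\notin\Omega$, hence almost everywhere as $\abs{\Omega}=0$. Finally, the uniform bound $\abs{k_{\alpha}^{\pm,\D}}\le\mM$ (valid for $\alpha\le\bar{\alpha}$ by Lemma~\ref{k_bounded} and the monotonicity of $A(x,\cdot)$, and in any case $k_{\alpha}^{\pm}$ is bounded for each fixed $\alpha$) dominates the integrand on any bounded interval, so the dominated convergence theorem yields $k_{\alpha}^{\pm,\D}\to k_{\alpha}^{\pm}$ in $L^1_{\loc}(\R)$, completing the proof.
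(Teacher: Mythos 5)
Your proof is correct, but it takes a genuinely different route from the paper's. You reduce the lemma to the continuity of $x\mapsto k_{\alpha}^{\pm}(x)$ on the open set $\R\setminus\Omega$, which you establish by a bounded-subsequence argument: pass to the limit in $A(x_{n},k_{\alpha}^{+}(x_{n}))=\alpha$ using the joint continuity of $A$ from \descref{A1}{A-1} and the continuity of $u_M^{+}$ from \descref{A3}{A-3}, and identify the limit by uniqueness of the root on the increasing branch; pointwise a.e.\ convergence of the grid sampling and dominated convergence then finish the job. The paper instead applies $A^{+}(x,\cdot)$ to the interpolant $k_{\alpha}^{+,\D}$, uses the $x$-modulus of Assumption \descref{B2}{B-2} to get the quantitative bound $\abs{A^{+}(x,k_{\alpha}^{+,\D}(x))-\alpha}\le\bar{\eta}\,\abs{a(x)-a^{\D}(x)}$, invokes Lemma~\ref{lemma_a_converge} ($a^{\D}\to a$ since $a\in\BV$), and then undoes $A^{+}$ via its continuous single-valued inverse $(A^{+})^{-1}(x,\cdot)$. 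Your argument is more elementary in that it does not use \descref{B2}{B-2} or the $\BV$ structure of $a$ at all for this lemma, only \descref{A1}{A-1}, \descref{A3}{A-3} and boundedness of $k_{\alpha}^{\pm}$; the paper's version buys an explicit rate in terms of $\abs{a(x)-a^{\D}(x)}$ and never needs to discuss the continuity of $x\mapsto k_{\alpha}^{\pm}(x)$ directly. Note that both arguments lean on the same unstated strengthening of \descref{A3}{A-3}, namely that $A(x,\cdot)$ is \emph{strictly} increasing on $[u_M^{+}(x),\infty)$ (you need it for uniqueness of $\ell$; the paper needs it for $(A^{+})^{-1}$ to be single-valued), and both bounds on $k_{\alpha}^{\pm}$ as stated really cover $\alpha\le\bar{\alpha}$, with the general case handled as you indicate via \descref{A4}{A-4}. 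Neither point is a defect of your proposal relative to the paper.
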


\begin{proof}
We prove the assertion for $k_{\alpha}^{+,\D}(x)$. The proof for $k_{\alpha}^{-,\D}(x)$ is
similar. Define 
\begin{equation}
\textrm{$A^+(x,\cdot): (u_M^+(x),\infty)\mapsto (0,\infty)$ by
$A^+(x,u) = A(x,u)$ with $u \in (u_M^+(x), \infty)$. }
\end{equation}
Note that for each $x\in \R$, 
$A^+(x,\cdot)$ has a continuous, single-valued inverse, $(A^+)^{-1}(x,\cdot)$, which is
defined by
\begin{equation}\label{A_inv}
(A^+)^{-1}(x,\alpha) = k^+_{\alpha}(x), \quad \alpha >0.
\end{equation}

We apply $A^+(x,\cdot)$ to both sides of the ``$+$'' version of \eqref{k_alpha}, resulting in
\begin{equation}\label{A_diff}
\begin{split}
	A^+(x,k_{\alpha}^{+,\D}(x)) 
	&= \sumj \chi_j(x) A^+(x,k_{\alpha,j}^{+})\\
	&= \sumj \chi_j(x) A^+(x_j,k_{\alpha,j}^{+}) 
	  + \sumj \chi_j(x)\left( A^+(x,k_{\alpha,j}^{+}) - A^+(x_j,k_{\alpha,j}^{+})\right)\\
	&= \alpha
	  + \sumj \chi_j(x)\left( A^+(x,k_{\alpha,j}^{+}) - A^+(x_j,k_{\alpha,j}^{+})\right). 
\end{split}
\end{equation}
From \eqref{A_diff} and the estimate
\begin{equation}
\abs{A^+(x,k_{\alpha,j}^{+}) - A^+(x_j,k_{\alpha,j}^{+})} \le \bar{\eta}\abs{a(x)-a(x_j)},
\end{equation}
we find that
\begin{equation}\label{A_alpha_1}
\begin{split}
	\abs{A^+(x,k_{\alpha}^{+,\D}(x)) - \alpha} 
	&\le \bar{\eta}\sumj \chi_j(x) \abs{a(x) - a(x_j)}\\
	&= \bar{\eta} \abs{a(x) - a^{\D}(x)}.
\end{split}
\end{equation}
The right hand side of \eqref{A_alpha_1} converges to zero in $L^1_{\textrm{loc}}(\R)$ as $\D \rightarrow 0$, 
thanks to Lemma~\ref{lemma_a_converge}.
Thus $A^+(x,k_{\alpha}^{+,\D}(x)) \rightarrow \alpha$ in $L^1_{\textrm{loc}}(\R)$.
Since $A^+(x,k_{\alpha}^{+,\D}(x)) > 0$, $\alpha>0$, we also have
$\abs{A^+(x,k_{\alpha}^{+,\D}(x))} \rightarrow \abs{\alpha}$ in $L^1_{\textrm{loc}}(\R)$.
This implies that  $A^+(x,k_{\alpha}^{+,\D}(x)) \rightarrow \alpha$ pointwise a.e.
Next, we invoke the continuity of $(A^+)^{-1}(x,\cdot)$ and employ \eqref{A_inv} to conclude that
$k_{\alpha}^{+,\D}(x) \rightarrow k_{\alpha}(x)$ a.e., and thus also in
$L^1_{\textrm{loc}}(\R)$.
\end{proof}

\noindent The following is Lemma 4.3 of \cite{GJT_2019}. The proof appearing in \cite{GJT_2019} applies equally well here.
\begin{lemma}\label{lemma_lim_kruzkov}
	The (subsequential) limit $u$ guaranteed by Lemma~\ref{lemma_compactness} satisfies 
	the adapted entropy inequalities 
	of Definition~\ref{def_adapted_entropy}.
\end{lemma}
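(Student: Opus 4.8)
The plan is to carry out a Lax--Wendroff type passage to the limit in the discrete entropy inequality of Lemma~\ref{entropy_lemma1_A}. Fix a nonnegative $\phi \in C_c^\infty([0,T)\times\R)$ and set $\phi_j^n := \phi(t^n,x_j)$. Multiplying \eqref{ent_discrete_A} by $\phi_j^n\,\D x \ge 0$, summing over $j\in\Z$ and $n=0,\dots,N$, and performing Abel summation (summation by parts) in both the time and space indices to transfer the differences onto $\phi$, I would obtain a fully discrete analogue of the weak inequality \eqref{E2}: a time term approximating $\iint_Q \abs{u-k_\alpha^{\pm}}\phi_t$, an initial--data term approximating $\int_\R \abs{u_0 - k_\alpha^{\pm}}\phi(0,\cdot)$, and a spatial term $\D t \sumj \mF^n_{\jph}\,(\Delta_+\phi_j^n/\Delta x)$ approximating $\iint_Q \sgn(u-k_\alpha^{\pm})(A(x,u)-\alpha)\phi_x$. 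The boundary contributions produced by the summation by parts vanish since $\phi$ has compact support in $[0,T)\times\R$.

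Next I would pass to the limit $\D\to 0$ in the ``kinetic'' terms. Writing each discrete sum as an integral of the piecewise--constant interpolants (using $u^{\D}$ from \eqref{def_u_De} and $k_\alpha^{\pm,\D}$ from \eqref{k_alpha}), the integrands are uniformly bounded by Lemma~\ref{lemma:A_u_bound} together with the boundedness of the adapted stationary states (cf.\ Lemma~\ref{k_bounded}), and they converge pointwise a.e.: $u^{\D}\to u$ a.e.\ by Lemma~\ref{lemma_compactness} and $k_\alpha^{\pm,\D}\to k_\alpha^{\pm}$ a.e.\ by Lemma~\ref{lemma_k_converge}. Dominated convergence then yields convergence of the time term to $\iint_Q \abs{u - k_\alpha^{\pm}}\phi_t$ and of the initial term to $\int_\R \abs{u_0 - k_\alpha^{\pm}}\phi(0,\cdot)$. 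The smoothness of $\phi$ turns the difference quotients $\Delta_+\phi_j^n/\Delta x$ and $(\phi_j^{n+1}-\phi_j^n)/\Delta t$ into uniform approximations of $\phi_x$ and $\phi_t$ on $\spt\phi$, so no regularity of $u$ itself is required here.

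The crux is the consistency of the numerical entropy flux, i.e.\ showing that the spatial term converges to $\iint_Q \sgn(u-k_\alpha^{\pm})(A(x,u)-\alpha)\phi_x$. Away from $\Omega$, Assumption~\descref{B1}{B-1} makes $x\mapsto A(x,\cdot)$ locally constant, so on any interface with $[x_j,x_{j+1}]\cap\Omega=\emptyset$ the flux $\bar A(\cdot,\cdot,x_j,x_{j+1})$ collapses to the single--point Godunov flux and $\mF^n_{\jph}$ is exactly the Godunov entropy flux for the Kruzhkov entropy $\abs{\,\cdot-k^{\pm}_{\alpha,j}}$. Using the Lipschitz continuity of $\bar A$ (Assumptions~\descref{A2}{A-2}, \descref{B2}{B-2}) and the a.e.\ convergences $u^{\D},k_\alpha^{\pm,\D}\to u,k_\alpha^{\pm}$, this converges a.e.\ to $\sgn(u-k_\alpha^{\pm})(A(x,u)-\alpha)$. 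The essential point, and the reason the adapted states enter, is that $A(x_j,k^{\pm}_{\alpha,j})=\alpha$ holds for \emph{every} $j$, so the entropy flux carries the common value $\alpha$ in its second slot on both sides of any interface; this is precisely the mechanism that dispenses with traces at the points of $\Omega$.

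The main obstacle is controlling the interface cells, namely those $j$ with $[x_j,x_{j+1}]\cap\Omega\neq\emptyset$, on which $\bar A$ genuinely mixes the two distinct fluxes $A(x_j,\cdot)$ and $A(x_{j+1},\cdot)$ and the identification above fails. Here I would argue that their aggregate contribution is negligible: the spatial integrand is uniformly bounded (Lemma~\ref{lemma:A_u_bound} and \descref{A2}{A-2}) and $\phi_x$ is bounded, while the union of these cells intersected with $\spt\phi$ shrinks to $\Omega\cap\spt\phi$, a set of Lebesgue measure zero by \descref{A1}{A-1}. Consequently the interface contribution tends to $0$, the a.e.\ consistency established off $\Omega$ upgrades to $L^1_{\loc}$ convergence of the discrete entropy flux by dominated convergence, and combining the three limits delivers exactly \eqref{E2}; hence the limit $u$ satisfies the adapted entropy inequalities of Definition~\ref{def_adapted_entropy}. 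This is the structure of the corresponding argument in \cite{GJT_2019}, which applies verbatim once the degenerate--flux versions of the supporting lemmas above are in hand.
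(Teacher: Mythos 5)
Your proposal is correct and is essentially the paper's own argument: the paper proves this lemma by citing Lemma 4.3 of \cite{GJT_2019}, which is exactly the Lax--Wendroff passage to the limit you describe (multiply the discrete entropy inequality of Lemma~\ref{entropy_lemma1_A} by the test function, sum by parts in space and time, and use the a.e.\ convergences of $u^{\D}$ and $k_{\alpha}^{\pm,\D}$ from Lemmas~\ref{lemma_compactness} and \ref{lemma_k_converge} together with consistency of the numerical entropy flux, the adapted states entering precisely through $A(x_j,k^{\pm}_{\alpha,j})=\alpha$). Your measure-theoretic dismissal of the cells meeting $\Omega$ is a minor variant of the flux-mismatch control via Assumption \descref{B2}{B-2} and $\TV(a)<\infty$ used there; both close the argument.
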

\noindent The following is basically Theorem 2.5 of \cite{GJT_2019}, whose proof is also valid here.
\begin{theorem}\label{thm_convergence}
	Assume that the flux function $A$ satisfies Assumptions  \descref{A1}{A-1} through \descref{A4}{A-4},
	and
	Assumptions  \descref{B1}{B-1}
	and \descref{B2}{B-2}. Also assume that $u_0\in L^{\infty}(\R)$. 
	Then as the mesh size $\D \rightarrow 0$, the approximations $u^{\D}$ generated by the Godunov scheme described 
	above converge in $L^1_{\loc}(Q)$ and pointwise a.e. in $Q$ to the unique adapted entropy solution 
	$u \in L^{\infty}(Q) \cap C([0,T]:L^1_{\loc}(\R))$ corresponding to the Cauchy problem 
	\eqref{eq:discont}, 
	\eqref{eq:data} with initial data $u_0$.
\end{theorem}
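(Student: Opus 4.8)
The plan is to assemble the compactness, consistency, and uniqueness results already in place, and then to strip away the temporary restrictions on the initial data by a stability argument. First I would treat the case assumed throughout this section, namely $u_0 \in \BV(\R)$ with $u_0 - u_M$ compactly supported. By \Cref{lemma_compactness} the family $\{u^{\D}\}$ has a subsequence converging in $L^1(Q)$ and boundedly a.e.\ to a limit $u \in L^{\infty}(Q) \cap C([0,T]:L^1_{\loc}(\R))$, and by \Cref{lemma_lim_kruzkov} this limit is an adapted entropy solution in the sense of \Cref{def_adapted_entropy}. Since \Cref{theorem1} guarantees that the adapted entropy solution with data $u_0$ is unique, the limit $u$ does not depend on the chosen subsequence. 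A standard subsequence-of-subsequence argument then upgrades this to convergence of the whole family: if $u^{\D}$ failed to converge to $u$, some subsequence would stay bounded away from $u$ in $L^1_{\loc}(Q)$, yet \Cref{lemma_compactness} would extract from it a further subsequence converging to the same unique limit $u$, a contradiction. Hence $u^{\D} \to u$ in $L^1_{\loc}(Q)$ and pointwise a.e.

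Next I would remove the restriction to compactly supported $\BV$ data. Given $u_0 \in L^{\infty}(\R)$, I would pick $u_0^m \in \BV(\R)$ with $u_0^m - u_M$ compactly supported, $\norm{u_0^m}_{\infty} \le \norm{u_0}_{\infty}$ (so that the bound $\mM$ is uniform in $m$), and $u_0^m \to u_0$ in $L^1_{\loc}(\R)$. By the first step, the scheme with data $u_0^m$ converges to the unique adapted entropy solution $u^m$. Applying the stability estimate of \Cref{theorem1} to the pair $u^m, u^{m'}$ controls $\norm{u^m(t,\cdot)-u^{m'}(t,\cdot)}{}$ on bounded spatial intervals by the $L^1$ distance of $u_0^m, u_0^{m'}$ over a slightly enlarged interval (finite speed $M$); thus $\{u^m\}$ is Cauchy in $L^1_{\loc}(Q)$, with some limit $u$. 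Passing to the limit in the entropy inequality \eqref{E2} satisfied by each $u^m$ shows that $u$ is an adapted entropy solution with data $u_0$, and \Cref{theorem1} identifies it as the unique one.

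It remains to show $u^{\D} \to u$ for the original data. Writing $u^{m,\D}$ for the scheme output with data $u_0^m$, I would split
\[
\norm{u^{\D}-u}_{L^1_{\loc}(Q)} \le \norm{u^{\D}-u^{m,\D}}_{L^1_{\loc}(Q)} + \norm{u^{m,\D}-u^m}_{L^1_{\loc}(Q)} + \norm{u^m-u}_{L^1_{\loc}(Q)}.
\]
The third term is small for large $m$ by the previous step, and the middle term tends to $0$ as $\D \to 0$ for each fixed $m$ by the first step. The main obstacle is the first term: it requires a scheme-level stability estimate uniform in $\D$, namely that $\norm{u^{\D}-u^{m,\D}}{}$ be controlled by the $L^1$ distance of the discrete initial data with a constant independent of the mesh. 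This discrete $L^1$ contraction is exactly what the monotonicity of the Godunov scheme (\Cref{lemma_A_scheme_monotone}) supplies, via the Crandall--Tartar lemma, since both marching sequences use the same conservative monotone update on the same grid. With this uniform estimate in hand, a diagonal choice $m = m(\D) \to \infty$ sufficiently slowly as $\D \to 0$ drives all three terms to zero, yielding convergence for general $u_0 \in L^{\infty}(\R)$ and completing the proof.
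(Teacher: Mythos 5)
Your proposal is correct and follows essentially the same route as the paper, which simply invokes the proof of Theorem 2.5 of \cite{GJT_2019}: subsequential compactness (\cref{lemma_compactness}) plus the discrete-to-continuous entropy consistency (\cref{lemma_lim_kruzkov}) and uniqueness (\cref{theorem1}) give convergence of the whole family for $\BV$ data with $u_0-u_M$ compactly supported, and the extension to $u_0\in L^{\infty}(\R)$ is carried out exactly as you describe, via $\BV$ approximations equal to $u_M$ outside compact sets combined with the $L^1$-stability of \cref{theorem1} and the discrete $L^1$ contraction furnished by monotonicity. Your three-term splitting with the Crandall--Tartar estimate and the diagonal choice $m=m(\D)$ is the standard way to make that last step precise.
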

 \section{The case of a Panov-type flux}\label{sec_BV}

For the case where $A(x,u)$ is unimodal ($u_M^-(x)=u_M^+(x) = u_M(x)$), Panov \cite{Panov2009a}
observed that the flux can be written in the form
\begin{equation}\label{panov_form}
A(x,u) = g(\beta(x,u)),
\end{equation}
 where $u\mapsto \beta(x,u)$ is strictly increasing, and $g$ is continuous and unimodal. Based on this observation,
 well-posedness (in the sense of Audusse-Perthame entropy solutions) was established in \cite{Panov2009a}
 (for the unimodal case).
 
 In this section, we assume that the flux has the form \eqref{panov_form}, but we allow for the
 the possibility that  $g$ may be degenerate, i.e., not unimodal.
It turns out that due to the special form \eqref{panov_form} it is possible to obtain a 
$\BV$ bound for the Godunov approximations (and thus the limit), whether or not the flux
is degenerate. At
the same time, this approach makes it possible to obtain compactness/existence
without the piecewise constant assumption of the previous section (the first part of Assumption  \descref{B1}{B-1}). We also do not need
the assumption that $u_M^{\pm} \in \BV(\R)$ (Assumption  \descref{B3}{B-3}).

One does not generally expect to obtain $\BV$ bounds for solutions to
conservation laws with discontinuous flux, so our $\BV$ bound may
seem a little surprising. However, our results are consistent with
previous of results of \cite{Ghoshal-JDE,NHM} where it was noted that 
the total variation is bounded if the flux has a minimum value that
is independent of $x$, which is the case here.

The specific assumptions about $g$ and $\beta$ are:
\begin{enumerate}[label=\textbf{C-\arabic*}]
\item \label{C1} 
The flux has the form $A(x,u) = g(\beta(x,u))$, where the properties of $g$ and $\beta$
are described below.

\item \label{C2}
For some $z^-, z^+$ with  $z^- \le 0 \le z^+$,
$g$ is strictly decreasing on $(-\infty,z^-)$ and strictly increasing on $(z^+,\infty)$, and 
$g(z) = 0$ for $z \in [z^-,z^+]$. 
	There is a continuous function $\kappa: [0,\infty) \rightarrow [0,\infty)$,
	which is strictly increasing with $\kappa(0)=0$, $\kappa(+\infty) = +\infty$, and
	such that
	\begin{equation}\label{uniform_unimodal_kappa}
	\begin{split}
	&\textrm{$g(z) \ge \kappa(z-z^+)$ for all $z \in [z^+,\infty)$},\\
	&\textrm{$g(z) \ge \kappa(-(z-z^-))$ for all $z \in (-\infty,z^-]$}.
	\end{split}
	\end{equation}

\item \label{C3}
$g(z)$ is (locally) Lipschitz-continuous, i.e.,
\begin{equation}\label{g_lip}
\textrm{$\abs{g(z_1) - g(z_2)} \le \mK_1(r) \abs{z_1-z_2}$ for $z_1, z_2 \in [-r,r]$},
\end{equation}
where $\mK_1:\R \rightarrow [0,\infty)$ is continuous.

\item \label{C4}
$\beta(x,u)$ is continuous on $\mathbb{R}\setminus {\Omega} \times \mathbb{R},$ 
where $\Omega$ is a closed zero measure set.
In addition, $u \mapsto \beta(x,u)$ is strictly increasing, and for each $x \in \R$,
$\abs{\beta(x,u)} \rightarrow \infty$
as $\abs{u} \rightarrow \infty$.

\item \label{C5}
For $u,v \in [-r,r]$
\begin{equation}\label{beta_2}
\abs{\beta(x,v)-\beta(x,u)} \le \mK_3(r) \abs{u-v},
\end{equation}
for some continuous $\mK_3: \R \rightarrow [0,\infty)$.
Also,
\begin{equation}\label{beta_1A}
\abs{\beta(x,u)-\beta(y,u)} \le \mK_4(u) \abs{\alpha(x)-\alpha(y)},
\end{equation}
where $\mK_4: \R \rightarrow [0,\infty)$ is continuous and $\alpha \in \BV(\R)$.

\item \label{C6} For some $\mK_2 >0$, independent of $x$
\begin{equation}\label{beta_1}
\abs{\beta(x,u)-\beta(x,v)} \ge \mK_2 \abs{u-v}.
\end{equation}

\end{enumerate}

\begin{remark}\normalfont
With this setup, the degeneracy comes from Assumption \descref{C2}{C-2}, 
from which it is clear that  $g'(z)=0$ on the interval $[z^-,z^+]$. 
\end{remark}

\begin{remark}\label{remark_inverse}\normalfont
Note that
since $u \mapsto \beta(x,u)$ is strictly increasing, there is an inverse, denoted
$\beta^{-1}(x,u)$. Moreover, $u \mapsto \beta^{-1}(x,u)$ is continuous for $x \in \R \setminus \Omega$.
Define
\begin{equation}
u_M^-(x)=\beta^{-1}(x,z^-), \quad u_M^+(x)=\beta^{-1}(x,z^+), \quad u_M(x) = \beta^{-1}(x,0).
\end{equation}
Due to the monotonicity of $u \mapsto \beta^{-1}(x,u)$, we have
  $u_M^-(x) \le u_M(x) \le u_M^+(x)$. Moreover,  $A(x,u) = 0$ for $u \in [u_M^-(x),u_M^+(x)]$, and
$u \mapsto A(x,u)$ is strictly decreasing on $(-\infty,u_M^-(x)]$
and strictly increasing on $[u_M^+(x),\infty)$.
\end{remark}

As mentioned above, the idea to take $A(x,u) = g(\beta(x,u))$ comes from
Panov's \cite{Panov2009a}. Panov proved existence and
uniqueness of this problem (under somewhat different 
regularity assumptions) using different analytical methods.
To our knowledge, the total variation bounds that we 
derive below are new.

\begin{example}
Suppose that for some $z^- \le 0 \le z^+$,
\begin{equation}
g(z) = 
\begin{cases}
(z-z^-)^2, &\quad z< z^-,\\
0, &\quad  z^- \le z \le z^+,\\
(z-z^+)^2, &\quad z>z^+.\\
\end{cases}
\end{equation}
If $\beta(x,u) = u-r(x)$, then 
$A(x,u) = g(u-r(x))$. All of the required hypotheses are satisfied if 
$r \in \BV(\R)$ and the set of its discontinuities is closed.
If $\beta(x,u) = s(x)u$, where $s(x) \ge \underbar{s}>0$, then 
$A(x,u) = g(s(x)u)$. If $z^-=z^+=0$ (the nondegenerate case), all of the required hypotheses are satisfied if 
$s \in \BV(\R)$ and the set of its discontinuities is closed.
\end{example}

\begin{example}
Given a conservation law in ``capacity form'' \cite[Section 2.4]{leveque_book}:
\begin{equation*}
\theta(x) v_t + f(v)_x = 0, \quad \theta(x) \ge \theta_{\textrm{min}} >0,
\end{equation*}
by making the change of variables $u = \theta(x) v$, we obtain
\begin{equation*}
u_t = f(u/\theta(x)) = 0.
\end{equation*}
In this case, $\beta(x,u) = u/\theta(x)$.
\end{example}

\begin{remark}\normalfont
In the unimodal case ($u_M^-(x) = u_M^+(x)=u_M(x)$), the assumption that the flux has the form
\eqref{panov_form} is not really a restriction. Indeed, reference \cite{Panov2009a} observes
that in the unimodal case the flux $A(x,u)$ can be written in the form \eqref{panov_form} with
\begin{equation}
\beta(x,u) = \sgn(x-u_M(x))A(x,u), \quad g(\beta) = \abs{\beta}.
\end{equation}
\end{remark}

\begin{lemma}\label{lemma_uM_bdd}
The following bound holds for some constant $\mK_0 \in \R$:
\begin{equation}
\textrm{$\abs{u_M(x)} \le \mK_0$ for all $x\in \R.$}
\end{equation}
\end{lemma}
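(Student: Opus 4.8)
The plan is to exploit the uniform coercivity bound \eqref{beta_1} (Assumption \descref{C6}{C-6}) together with the spatial modulus of continuity \eqref{beta_1A} (Assumption \descref{C5}{C-5}), anchoring the estimate at a single fixed reference point. Recall from Remark~\ref{remark_inverse} that $u_M(x)=\beta^{-1}(x,0)$, i.e. $\beta(x,u_M(x))=0$ for every $x\in\R$; this is well defined since $u\mapsto\beta(x,u)$ is strictly increasing with $\abs{\beta(x,u)}\to\infty$ as $\abs{u}\to\infty$ by \descref{C4}{C-4}.

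First I would fix once and for all a point $x_0\in\R$ and set $m_0:=u_M(x_0)$, a finite real number, so that $\beta(x_0,m_0)=0$. The key observation is that although $\beta(x,\cdot)$ varies with $x$, the value of $\beta$ evaluated at the \emph{fixed} state $m_0$ cannot move far from $0$. Indeed, applying \eqref{beta_1A} with $u=m_0$ and $y=x_0$, and using $\beta(x_0,m_0)=0$, gives
\[
\abs{\beta(x,m_0)} \le \mK_4(m_0)\,\abs{\alpha(x)-\alpha(x_0)}.
\]
Since $\alpha\in\BV(\R)$, the oscillation $\abs{\alpha(x)-\alpha(x_0)}$ is bounded by $\TV(\alpha)$ uniformly in $x$, so $\abs{\beta(x,m_0)}\le \mK_4(m_0)\,\TV(\alpha)=:C_1$, a constant independent of $x$.

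Next I would convert this bound on $\beta(x,m_0)$ into a bound on $u_M(x)$ via the lower bound \eqref{beta_1}. Taking $u=u_M(x)$ and $v=m_0$ in \eqref{beta_1}, and recalling $\beta(x,u_M(x))=0$, yields
\[
\mK_2\,\abs{u_M(x)-m_0} \le \abs{\beta(x,u_M(x))-\beta(x,m_0)} = \abs{\beta(x,m_0)} \le C_1.
\]
Hence $\abs{u_M(x)-m_0}\le C_1/\mK_2$, and the triangle inequality gives $\abs{u_M(x)}\le \abs{m_0}+C_1/\mK_2=:\mK_0$, uniformly in $x$, which is the claimed bound.

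There is no deep obstacle here; the argument is a short two-step estimate. The only point requiring a little care is to hold the comparison state $m_0$ fixed, so that \eqref{beta_1A} produces the constant $\mK_4(m_0)$ independent of $x$ (rather than the $x$-dependent coefficient one would obtain by comparing $\beta(x,u_M(x))$ with $\beta(x_0,u_M(x_0))$ directly), and to invoke the global boundedness of the BV function $\alpha$. If preferred, $\abs{\alpha(x)-\alpha(x_0)}$ may instead be bounded by $2\norm{\alpha}_{\infty}$, which is finite since functions in $\BV(\R)$ are bounded.
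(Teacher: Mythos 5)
Your proof is correct and takes essentially the same two-step route as the paper: a uniform-in-$x$ bound on $\beta$ at a fixed reference state via the second part of \descref{C5}{C-5}, converted into a bound on $u_M(x)$ via the coercivity \descref{C6}{C-6}. The only (cosmetic) difference is the choice of anchor: the paper anchors at the state $u=0$, obtaining $\abs{u_M(x)}\le\abs{\beta(x,0)}/\mK_2$ directly and then noting that $\beta(\cdot,0)$ is bounded, whereas you anchor at $m_0=u_M(x_0)$.
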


\begin{proof}
Using \descref{C6}{C-6} and $\beta(x,u_M(x)) = 0$, we find that
\begin{equation}
\begin{split}
\abs{\beta(x,u)}
&= \abs{\beta(x,u)-\beta(x,u_M(x))}\\
&\ge \mK_2 \abs{u - u_M(x)}.
\end{split}
\end{equation}
Substituting $u=0$, we obtain
\begin{equation}
\abs{u_M(x)} \le \abs{\beta(x,0)}/\mK_2.
\end{equation}
The proof is completed by observing that $\beta(x,0)$ is bounded, due to
the second part of Assumption \descref{C5}{C-5}.
\end{proof}

\begin{lemma}\label{lemma_Asmptns_A}
Assumptions \descref{A1}{A-1} through \descref{A4}{A-4} hold, as does Assumption \descref{B2}{B-2}.
\end{lemma}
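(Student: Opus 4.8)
The plan is to verify the six conclusions one at a time, treating A-1, A-2, A-4 and B-2 as direct consequences of the Panov structure $A(x,u)=g(\beta(x,u))$ combined with the one-sided growth in \descref{C2}{C-2}, the Lipschitz bounds \descref{C3}{C-3} and \descref{C5}{C-5}, and the nondegeneracy \descref{C6}{C-6}; the ordering and monotonicity parts of A-3 are already recorded in Remark~\ref{remark_inverse}, so only the continuity of $u_M^{\pm}$ will require genuine work. The first thing I would establish, because it underlies everything else, is a bound on $\beta(x,0)$ that is uniform in $x$: since $\alpha \in \BV(\R)$ is bounded, the second part of \descref{C5}{C-5} gives $\abs{\beta(x,0)-\beta(y,0)} \le \mK_4(0)\abs{\alpha(x)-\alpha(y)} \le \mK_4(0)\TV(\alpha)$, so $\abs{\beta(x,0)} \le B_0$ for a constant $B_0$ independent of $x$. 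Together with the first part of \descref{C5}{C-5} this yields $\abs{\beta(x,u)} \le B_0 + \mK_3(M)M =: r(M)$ whenever $\abs{u}\le M$, uniformly in $x$. This estimate is what converts the pointwise-in-$x$ hypotheses on $g$ into the uniform conclusions.

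With that bound in hand, A-1 is immediate: $g$ is continuous (being locally Lipschitz by \descref{C3}{C-3}) and $\beta$ is continuous on $(\R\setminus\Omega)\times\R$ by \descref{C4}{C-4}, so their composition is continuous off the closed null set $\Omega$. For A-2 I would chain the two Lipschitz estimates: for $u,v\in[-M,M]$ one has $\beta(x,u),\beta(x,v)\in[-r(M),r(M)]$, whence $\abs{A(x,u)-A(x,v)} \le \mK_1(r(M))\abs{\beta(x,u)-\beta(x,v)} \le \mK_1(r(M))\mK_3(M)\abs{u-v}$, so $q(M):=\mK_1(r(M))\mK_3(M)$ serves. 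B-2 is the same chain differenced in $x$: fixing $u$ and setting $\rho(u):=B_0+\mK_3(\abs{u})\abs{u}$, I obtain $\abs{A(x,u)-A(y,u)} \le \mK_1(\rho(u))\abs{\beta(x,u)-\beta(y,u)} \le \mK_1(\rho(u))\mK_4(u)\abs{\alpha(x)-\alpha(y)}$, so $\eta(u):=\mK_1(\rho(u))\mK_4(u)$ (continuous) and $a:=\alpha\in\BV(\R)$ give B-2.

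For A-4 I would transfer the growth of $g$ through the nondegeneracy of $\beta$. For $u\ge u_M^+(x)=\beta^{-1}(x,z^+)$, monotonicity gives $\beta(x,u)\ge z^+$, so \descref{C2}{C-2} yields $A(x,u)=g(\beta(x,u))\ge\kappa(\beta(x,u)-z^+)$; since $\beta(x,u)-z^+=\beta(x,u)-\beta(x,u_M^+(x))\ge\mK_2(u-u_M^+(x))$ by \descref{C6}{C-6}, monotonicity of $\kappa$ gives $A(x,u)\ge\kappa(\mK_2(u-u_M^+(x)))$. The symmetric computation handles $u\le u_M^-(x)$, so $\gamma(s):=\kappa(\mK_2 s)$ — continuous, strictly increasing, with $\gamma(0)=0$ and $\gamma(+\infty)=+\infty$ — works for both branches. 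For A-3, the inequalities $u_M^-\le u_M^+$, the vanishing of $A$ on $[u_M^-,u_M^+]$, and the one-sided monotonicity are exactly what Remark~\ref{remark_inverse} supplies, leaving only the continuity of $x\mapsto u_M^{\pm}(x)$ on $\R\setminus\Omega$.

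The main obstacle — the one step that is not a mechanical chaining of inequalities — is this continuity claim. I expect to deduce it from joint continuity of $(x,z)\mapsto\beta^{-1}(x,z)$ on $(\R\setminus\Omega)\times\R$, which follows from the joint continuity of $\beta$ together with its strict monotonicity and properness in $u$ (all from \descref{C4}{C-4}) by a squeeze argument: given $x_0\notin\Omega$ and $\varepsilon>0$, strict monotonicity makes $\beta(x_0,u_M^+(x_0)-\varepsilon)<z^+<\beta(x_0,u_M^+(x_0)+\varepsilon)$, and continuity of $x\mapsto\beta(x,\cdot)$ on the neighborhood of $x_0$ inside $\R\setminus\Omega$ preserves these strict inequalities for $x$ near $x_0$, forcing $u_M^+(x)\in(u_M^+(x_0)-\varepsilon,u_M^+(x_0)+\varepsilon)$; specializing to $z=z^-$ handles $u_M^-$. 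This yields the continuity required in A-3 and completes the verification.
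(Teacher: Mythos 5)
Your proposal is correct and follows essentially the same route as the paper: the same Lipschitz chains through $g$ and $\beta$ for A-2 and B-2 (with the same choices of $q$, $\eta$, $a$), the same $\gamma(s)=\kappa(\mK_2 s)$ construction for A-4, and the same reduction of A-3 to the inverse-function structure of Remark~\ref{remark_inverse}; your uniform bound on $\beta$ anchored at $u=0$ is interchangeable with the paper's version anchored at $u_M(x)$ via Lemma~\ref{lemma_uM_bdd}, since both rest on the second part of \descref{C5}{C-5}. Your explicit squeeze argument for the continuity of $x\mapsto u_M^{\pm}(x)$ is in fact more detailed than the paper, which simply delegates that point to Remark~\ref{remark_inverse}.
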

\begin{proof}
Assumption \descref{A1}{A-1} holds, due to the assumptions
that $\beta(x,u)$ is continuous on $\mathbb{R}\setminus {\Omega} \times \mathbb{R}$,
and that $g$ is continuous on $\R$. 

To prove that Assumption \descref{A2}{A-2} holds, assume that $u,v \in [-M,M]$. Then
\begin{equation}\label{bound_beta}
\begin{split}
\abs{\beta(x,u)} 
&= \abs{\beta(x,u) - \beta(x,u_M(x))}\,\, \textrm{since $\beta(x,u_M(x))=0$}\\
&\le \mK_3(M) \abs{u - u_M(x)}\,\, \textrm{by \descref{C5}{C-5}}\\
&\le  \mK_3(M) \left(M + \mK_0\right)\,\, \textrm{by Lemma~\ref{lemma_uM_bdd}}.\\
\end{split}
\end{equation}
Define $\tilde{\mK}_3(M):= \mK_3(M) \left(M + \mK_0\right)$.
We have $\abs{\beta(x,u)} \le \tilde{\mK}_3(M)$, and similarly, $\abs{\beta(x,v)} \le  \tilde{\mK}_3(M)$.
Thus,
\begin{equation}
\begin{split}
\abs{A(x,u)-A(x,v)} 
&=\abs{g(\beta(x,u)) - g(\beta(x,v))}\\
&\le \mK_1(\tilde{\mK}_3(M))\abs{\beta(x,u)-\beta(x,v)}\,\, \textrm{by \descref{C3}{C-3}}\\
&\le \mK_1(\tilde{\mK}_3(M)) \mK_3(M) \abs{u-v}\,\, \textrm{by \descref{C5}{C-5}}.
\end{split}
\end{equation}
Thus, Assumption \descref{A2}{A-2} holds with $q(M) = \mK_1(\tilde{\mK}_3(M)) \mK_3(M)$.

That  Assumption \descref{A3}{A-3} holds is the content of
Remark~\ref{remark_inverse}.
To verify that Assumption \descref{A4}{A-4} holds, 
recall from Remark~\ref{remark_inverse} that $A(x,u)$ is strictly decreasing on $(-\infty,u_M^-(x)]$ and 
strictly increasing on $[u_M^+(x), \infty)$. In fact by \eqref{uniform_unimodal_kappa}, and since $\abs{\beta(x,u)} \rightarrow \infty$
as $\abs{x} \rightarrow \infty$, we have that $A(x,u) \rightarrow \infty$
as $\abs{x} \rightarrow \infty$.
Define $\gamma(u) = \kappa(\mK_2 u)$.
It is clear that $\kappa(0)=0$, $\kappa(+\infty) = +\infty$.
To show that the first inequality of \eqref{uniform_unimodal} holds, let $u \ge u_M^+(x)$,
\begin{equation}
\begin{split}
A(x,u) 
&= g(\beta(x,u))\\
&\textrm{$\ge \kappa(\beta(x,u) - z^+)$ by \eqref{uniform_unimodal_kappa}}\\
&\textrm{$= \kappa(\beta(x,u) - \beta(x,u_M^+(x)))$ by substituting $\beta(x,u_M^+(x))=z^+$}\\
&\textrm{$\ge \kappa(\mathcal{K}_2 (u-u_M^+(x)))$ by \eqref{beta_1} and monotonicity of $\kappa$}\\
&=\gamma(u-u_M^+(x)).
\end{split}
\end{equation}
The proof of the second inequality of \eqref{uniform_unimodal} is similar.

To prove that Assumption \descref{B2}{B-2} holds we use \eqref{bound_beta} 
 with $M = \abs{u}$, 
which yields
\begin{equation}
\abs{\beta(x,u)}\le \tilde{\mK}_3(\abs{u}).
\end{equation}
Then,
\begin{equation}
\begin{split}
\abs{A(x,u)-A(y,u)}
&=\abs{g(\beta(x,u)) - g(\beta(y,u))}\\
&\le \mK_1(\tilde{\mK}_3(\abs{u}))\abs{\beta(x,u)-\beta(y,u)}\,\, \textrm{by \descref{C3}{C-3}}\\
&\le  \mK_1(\tilde{\mK}_3(\abs{u})) \mK_4(u) \abs{\alpha(x)-\alpha(y)}\,\, \textrm{by \descref{C5}{C-5}}.
\end{split}
\end{equation}
Thus Assumption \descref{B2}{B-2} holds with $\eta(u) = \mK(\tilde{\mK}_3(\abs{u})) \mK_4(u)$, 
and $a(x) = \alpha(x)$.
\end{proof}

As in Section~\ref{sec_godunov}, for now we assume that {\color{blue}$u_0-u_M$} has compact support and $u_0 \in \BV(\R)$.
Due to Lemma~\ref{lemma_Asmptns_A} we can proceed as in Section~\ref{sec_godunov}.
With $\bar{\alpha}$ and $k_{\bar{\alpha}}^{\pm}$ defined by \eqref{A_2} and \eqref{define_k},
Lemmas~\ref{k_bounded} and \ref{lemma:k_stationary_A} are valid here also. We can also define the
quantities $\mathcal{M}, \bar{\eta}, L$ appearing in \eqref{def_B_L1}, and the ordering appearing in
\eqref{u_M_bounds} holds, in particular
 $\norm{u_M}_{\infty} \le \mM$.

\begin{lemma}\label{lemma_beta_bdd}
For each $Z >0$,
\begin{equation}
\sup_{\abs{u} \le Z, x\in \R} \abs{\beta(x,u)} < \infty.
\end{equation}
\end{lemma}

\begin{proof}
We have
\begin{equation}
\begin{split}
\abs{\beta(x,u)} 
&= \abs{\beta(x,u)-\beta(x,u_M(x))}\,\, \textrm{using $\beta(x,u_M(x))=0$}\\
&\le  \mK_3(Z) \abs{u-u_M(x))}\\
&\le \mK_3(Z) (Z + \mM).
\end{split}
\end{equation}
Here we have used $\norm{u_M}_{\infty} \le \mM$.
\end{proof}
Let $\mP = \sup_{\abs{u} \le \mM, x\in \R} \abs{\beta(x,u)}$, and
define
\begin{equation}
L_{\beta} = \mK_3(\mM), \quad L_g = \mK_1(\mP).
\end{equation}
We assume that the following  CFL condition holds:
\begin{equation}\label{CFL_1}
\lambda L_g L_{\beta} \le 1/2.
\end{equation}
It is readily verified that the Lipschitz constant $L$ appearing in the CFL condition \eqref{CFL}
satisfies $L \le L_g L_{\beta}$, and so \eqref{CFL} also holds.

Lemmas \ref{lemma_A_scheme_monotone} (monotonicity) 
and \ref{lemma:A_u_bound} ($u_j^n$ bounded) of Section~\ref{sec_godunov}
are applicable with the more restrictive CFL condition \eqref{CFL_1} in effect. 

For the remainder of the convergence analysis we make use of the fact that
the scheme can be written in terms of the $\beta$ variable.
It is clear from Assumption  \descref{C2}{C-2} that
$g$ is nonincreasing on $(-\infty,0)$ and nondecreasing on $(0,\infty)$. As a consequence
 the Godunov flux $\bar{g}$ 
consistent with $g$ can be expressed as follows:
\begin{equation}\label{bar_g}
\bar{g}(p,q) = \max\{g(\max(p,0)),g(\min(q,0)) \}.
\end{equation}
\begin{lemma}\label{barA_barg}
	The following relationship between the Godunov fluxes $\bar{A}$ and $\bar{g}$ holds:
	\begin{equation}
	\bar{A}(u,v,x,y) = \bar{g}(\beta(x,u), \beta(y,v)).
	\end{equation}
\end{lemma}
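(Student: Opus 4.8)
The plan is to exploit the fact that the inner truncations $\max(\,\cdot\,,u_M(x))$ and $\min(\,\cdot\,,u_M(x))$ appearing in the definition of $\bar{A}$ can be transported through the strictly increasing map $u \mapsto \beta(x,u)$, after which applying $g$ converts each of the two terms defining $\bar{A}$ into the corresponding term of $\bar{g}$. Concretely, recalling the definition \eqref{def_bar_A_direct} with $x,y$ in place of $x_j,x_{j+1}$, I must match
\begin{equation*}
\bar{A}(u,v,x,y) = \max\{A(x,\max(u,u_M(x))),\, A(y,\min(v,u_M(y)))\}
\end{equation*}
against $\bar{g}(\beta(x,u),\beta(y,v)) = \max\{g(\max(\beta(x,u),0)),\, g(\min(\beta(y,v),0))\}$ from \eqref{bar_g}.

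First I would record the elementary observation that any strictly increasing function $\phi$ commutes with $\max$ and $\min$, i.e.\ $\phi(\max(s,t)) = \max(\phi(s),\phi(t))$ and $\phi(\min(s,t)) = \min(\phi(s),\phi(t))$ for all reals $s,t$. Applying this with $\phi = \beta(x,\cdot)$, which is strictly increasing by Assumption \descref{C4}{C-4}, and using the pivot identity $\beta(x,u_M(x)) = 0$ (valid since $u_M(x) = \beta^{-1}(x,0)$, see Remark~\ref{remark_inverse}), I obtain
\begin{equation*}
\beta(x,\max(u,u_M(x))) = \max(\beta(x,u),\beta(x,u_M(x))) = \max(\beta(x,u),0),
\end{equation*}
and likewise $\beta(y,\min(v,u_M(y))) = \min(\beta(y,v),0)$. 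Applying $g$ to both and invoking $A(x,\cdot) = g(\beta(x,\cdot))$ yields
\begin{equation*}
A(x,\max(u,u_M(x))) = g(\max(\beta(x,u),0)), \quad A(y,\min(v,u_M(y))) = g(\min(\beta(y,v),0)).
\end{equation*}
Taking the maximum of these two expressions and comparing with \eqref{bar_g} gives $\bar{A}(u,v,x,y) = \bar{g}(\beta(x,u),\beta(y,v))$, as claimed.

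There is essentially no serious obstacle here; the proof is a short computation. The only two points requiring care are, first, the verification that $\beta(x,\cdot)$ genuinely commutes with $\max$ and $\min$, which rests solely on its strict monotonicity (so that $\beta(x,\cdot)$ maps the larger argument to the larger value), and second, the fact that the pivot value $\beta(x,u_M(x))$ equals \emph{exactly} $0$, so that the cutoff at $u_M(x)$ built into $\bar{A}$ corresponds precisely to the cutoff at $0$ built into $\bar{g}$. Both facts are immediate from Assumption \descref{C4}{C-4} and Remark~\ref{remark_inverse}.
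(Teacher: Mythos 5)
Your proof is correct and follows essentially the same route as the paper: both arguments hinge on the identities $\beta(x,\max(u,u_M(x))) = \max(\beta(x,u),0)$ and $\beta(y,\min(v,u_M(y))) = \min(\beta(y,v),0)$, obtained from the strict monotonicity of $\beta(x,\cdot)$ together with $\beta(x,u_M(x))=0$, after which one substitutes $A = g\circ\beta$ into the definition of $\bar{A}$ and matches the result with the formula for $\bar{g}$. No gaps; this is the paper's computation.
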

\begin{proof}
	We claim that
	\begin{equation}\label{beta_facts}
	\beta(x,\max(u,u_M(x))) = \max(\beta(x,u),0), \quad \beta(y,\min(v,u_M(y))) = \min(\beta(y,v),0).
	\end{equation}
	To verify the first part of the claim, note that since $u \mapsto \beta(x,u)$ is nondecreasing,
	\begin{equation}
	\begin{split}
	 \beta(x,\max(u,u_M(x))) 
	    &= \max(\beta(x,u), \beta(x,u_M(x)))\\
	    &= \textrm{$\max(\beta(x,u), \beta(x,\beta^{-1}(x,0))$), using $u_M(x)=\beta^{-1}(x,0)$}\\
	    &= \max(\beta(x,u),0).
	\end{split}
	\end{equation}
	The second assertion of \eqref{beta_facts} is verified in a similar manner.
	
	Next, starting from \eqref{def_bar_A_direct} and then using $A(x,u) = g(\beta(x,u))$, along with \eqref{beta_facts} and 
	\eqref{bar_g}, we find that
	\begin{equation}\label{scheme}
	\begin{split}
	\bar{A}(u,v,x,y) 
	&= \max \{A(x,\max(u,u_M(x))), A(y,\min(v,u_M(y)))\}\\
	&= \max \{g(\beta(x,\max(u,u_M(x)))), g(\beta(y,\min(v,u_M(y))))\}\\
	&= \max \{g(\max(\beta(x,u),0)), g(\min(\beta(y,v),0)\}\\
	&=\bar{g}(\beta(x,u), \beta(y,v)).
	\end{split}
	\end{equation}
\end{proof}
	Let $\beta_j^n = \beta(x_j,u_j^n)$.
	Lemma~\ref{barA_barg} makes it possible to write the marching formula \eqref{scheme_A} in the
	equivalent form
\begin{equation}\label{scheme_beta}
	u_j^{n+1} = u_j^n - \lambda \D_- \bar{g}(\beta_j^n, \beta_{j+1}^n), \quad j \in \Z, \quad n=0,1,\ldots,N,
\end{equation}
	which we abbreviate as
\begin{equation}\label{scheme_beta_short}
	\textrm{$u_j^{n+1} = u_j^n - \lambda \D_- \bar{g}_{\jph}^n$, 
	where $\bar{g}_{\jph}^n=\bar{g}(\beta_j^n, \beta_{j+1}^n)$}.
\end{equation}

\begin{lemma}\label{lemma_godunov_tvd}
	The Godunov scheme is TVD with respect to $\{\beta_j^n\}$ in the following sense:
    \begin{eqnarray}
    \sum\limits_{j\in \mathbb{Z}}\abs{\beta_{j+1}^{n+1} - \beta_j^{n+1}} \le \sum\limits_{j\in \mathbb{Z}}\abs{\beta_{j+1}^{n} - \beta_j^{n}},
    \end{eqnarray}
    and for some $\Delta$-independent constant $\mK_5>0$,
\begin{equation}\label{bv_beta_n}
\sumj \abs{\beta_{j+1}^{n} - \beta_j^{n}} \le \mK_5.
\end{equation}

\end{lemma}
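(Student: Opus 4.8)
The plan is to recast the update \eqref{scheme_beta_short}, which evolves $u_j^n$, as an exact recursion for the quantities $\beta_j^n$ and then invoke a Harten-type argument. First I would introduce the secant slope of $u \mapsto \beta(x_j,u)$ between the old and new states,
\[
\theta_j^n := \frac{\beta_j^{n+1} - \beta_j^n}{u_j^{n+1} - u_j^n},
\]
with the convention $\theta_j^n := 0$ when $u_j^{n+1} = u_j^n$ (in which case \eqref{scheme_beta_short} forces $\bar g_{\jph}^n = \bar g_{\jmh}^n$, so nothing is lost). Because $u \mapsto \beta(x_j,u)$ is strictly increasing (Assumption \descref{C4}{C-4}), we have $\theta_j^n \ge 0$; and the bound $\abs{u_j^n} \le \mM$ (Lemma~\ref{lemma:A_u_bound}) together with \eqref{beta_2} gives $\theta_j^n \le \mK_3(\mM) = L_\beta$. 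Multiplying \eqref{scheme_beta_short} by $\theta_j^n$ then yields the exact $\beta$-recursion $\beta_j^{n+1} = \beta_j^n - \lambda \theta_j^n(\bar g_{\jph}^n - \bar g_{\jmh}^n)$.

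Next I would place this recursion into incremental form. Using consistency $\bar g(p,p)=g(p)$ together with the monotonicity of $\bar g$ visible from \eqref{bar_g} and \descref{C2}{C-2} (nondecreasing in its first argument, nonincreasing in its second), I would write
\[
\bar g_{\jph}^n - \bar g_{\jmh}^n = -c_{\jph}^n \Delta_+ \beta_j^n + d_{\jmh}^n \Delta_- \beta_j^n,
\]
where $c_{\jph}^n = [g(\beta_j^n) - \bar g_{\jph}^n]/\Delta_+\beta_j^n \ge 0$ and $d_{\jmh}^n = [g(\beta_j^n) - \bar g_{\jmh}^n]/\Delta_-\beta_j^n \ge 0$ (each set to $0$ when its denominator vanishes). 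The Lipschitz bound \descref{C3}{C-3} on $g$, with arguments controlled by $\mP$, forces $0 \le c_{\jph}^n,\, d_{\jph}^n \le \mK_1(\mP) = L_g$. Hence $\beta_j^{n+1} = \beta_j^n + C_{\jph}^n \Delta_+\beta_j^n - D_{\jmh}^n \Delta_-\beta_j^n$ with the nonnegative coefficients $C_{\jph}^n = \lambda\theta_j^n c_{\jph}^n$ and $D_{\jmh}^n = \lambda\theta_j^n d_{\jmh}^n$. Crucially $C_{\jph}^n \le \lambda L_\beta L_g$ and $D_{\jph}^n = \lambda\theta_{j+1}^n d_{\jph}^n \le \lambda L_\beta L_g$, so the CFL condition \eqref{CFL_1} gives $C_{\jph}^n + D_{\jph}^n \le 2\lambda L_\beta L_g \le 1$.

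With $C_{\jph}^n, D_{\jph}^n \ge 0$ and $C_{\jph}^n + D_{\jph}^n \le 1$ in hand, the first assertion follows from the standard Harten argument: differencing the incremental update, taking absolute values, and summing over $j$ telescopes to $\sumj\abs{\beta_{j+1}^{n+1}-\beta_j^{n+1}} \le \sumj\abs{\beta_{j+1}^n - \beta_j^n}$. For the uniform bound \eqref{bv_beta_n} I would iterate this down to $n=0$ and estimate the initial $\beta$-variation by inserting the intermediate value $\beta(x_j, u_0(x_{j+1}))$ into $\beta_{j+1}^0 - \beta_j^0 = \beta(x_{j+1},u_0(x_{j+1})) - \beta(x_j,u_0(x_j))$. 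The two resulting differences are controlled by \eqref{beta_1A} and \eqref{beta_2} respectively, giving $\sumj\abs{\beta_{j+1}^0 - \beta_j^0} \le \big(\sup_{\abs{u}\le\mM}\mK_4(u)\big)\TV(\alpha) + L_\beta\,\TV(u_0) =: \mK_5$, which is independent of $\D$ since $\alpha,\, u_0 \in \BV(\R)$.

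The main obstacle is conceptual rather than computational: the scheme evolves $u$, yet the desired variation bound is for $\beta$, which carries the spatial ($x_j$) dependence. The device that overcomes this is the secant slope $\theta_j^n$, whose nonnegativity (from monotonicity of $\beta$ in $u$) and boundedness (from \descref{C3}{C-3} and \descref{C5}{C-5}) are precisely what make the $\beta$-recursion fit the hypotheses of the Harten lemma. Once this reduction is in place, the Panov form pays off: since $g$, and hence $\bar g$, is a single fixed function, the incremental coefficients are nonnegative uniformly in $j$, and no special treatment is needed at the flux discontinuities $\Omega$.
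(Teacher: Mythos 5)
Your proposal is correct and takes essentially the same route as the paper's proof: the identical secant-slope $\theta_j^{\nph}$ device to turn the $u$-update into an exact $\beta$-recursion, the same incremental-form decomposition obtained by inserting $\bar{g}(\beta_j^n,\beta_j^n)=g(\beta_j^n)$, Harten's lemma under the CFL condition \eqref{CFL_1}, and the same splitting of $\beta_{j+1}^0-\beta_j^0$ into a $u$-difference and an $x$-difference controlled by \descref{C5}{C-5}. No gaps; the argument matches the paper step for step.
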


\begin{proof}
First, by combining Lemmas~\ref{lemma:A_u_bound} and \ref{lemma_beta_bdd},
we have 
\begin{equation}\label{bound_beta_P}
\abs{\beta_j^n}  \le \mP, \quad j \in \Z, \quad n \ge 0.
\end{equation}
Thus $L_g$ serves as a Lipschitz constant for $g(\cdot)$, and also
for both arguments of the Godunov numerical flux $\bar{g}(\cdot,\cdot)$
in the calculations that follow.
Next, we apply
 $\beta(x_j, \cdot)$ to both sides of \eqref{scheme_beta_short}, which
yields
\begin{equation}\label{tv_beta_1}
\beta(x_j,u_j^{n+1}) = \beta(x_j,u_j^n - \lambda \D_- \bar{g}_{\jph}^n).
\end{equation}
The right side of \eqref{tv_beta_1} can be expressed as
\begin{equation}
\beta(x_j,u_j^n - \lambda \D_- \bar{g}_{\jph}^n) =
\beta(x_j,u_j^n) - \lambda \theta_j^{\nph} \D_- \bar{g}_{\jph}^n,
\end{equation}
where, using $u_j^{n+1}-u_j^n = - \lambda \D_- \bar{g}_{\jph}^n$,
\begin{equation}
 \theta_j^{\nph} = 
 \begin{cases}
 {\beta(x_j,u_j^{n+1}) - \beta(x_j,u_j^n) \over u_j^{n+1}-u_j^n},
 \quad & u_j^{n+1}-u_j^n \neq 0,\\
0, \quad & u_j^{n+1}-u_j^n =0.
\end{cases}
\end{equation}
Thus we have
\begin{equation}\label{beta_scheme}
\beta_j^{n+1} = \beta_j^n - \lambda \theta_j^{\nph} \Delta_- \bar{g}^n_{\jph},
\end{equation}
and it is clear that 
\begin{equation}\label{theta_bound}
 0 \le  \theta_j^{\nph} \le L_{\beta}.
\end{equation}
Next we write \eqref{beta_scheme} in incremental form:
\begin{equation}\label{beta_incremental}
\beta_j^{n+1} = \beta_j^n +\mC_{\jph}^n \Delta_+ \beta_j^n - \mD_{\jmh}^n \Delta_- \beta_j^n,
\end{equation}
where
\begin{equation}
\begin{split}
\mC_{\jph}^n &=
\begin{cases}
 - \lambda \theta_j^{\nph} 
\left({\bar{g}(\beta_{j}^n,\beta_{j+1}^n) - \bar{g}(\beta_{j}^n,\beta_j^n)\over \beta_{j+1}^n - \beta_j^n }\right), 
\quad & \beta_{j+1}^n - \beta_j^n \ne 0,\\
0, \quad & \beta_{j+1}^n - \beta_j^n = 0,
\end{cases}\\
\mD_{\jmh}^n &=  
\begin{cases}
\lambda \theta_j^{\nph} 
\left({\bar{g}(\beta_{j}^n,\beta_j^n) - \bar{g}(\beta_{j-1}^n,\beta_j^n)\over \beta_{j}^n - \beta_{j-1}^n }\right),
\quad & \beta_{j}^n - \beta_{j-1}^n \ne 0,\\
0, \quad & \beta_{j}^n - \beta_{j-1}^n = 0.
\end{cases}
\end{split}
\end{equation}
Recalling \eqref{theta_bound} and that
$\bar{g}(\cdot,\cdot)$ is a monotone numerical flux \cite{CranMaj:Monoton,leveque_book}, we have
$\mC_{\jph}^n, \mD_{\jmh}^n \ge 0$. 
Moreover, due to the CFL condition \eqref{CFL_1},
along with the fact that $L_g$ serves as a Lipschitz constant for 
$\bar{g}(\cdot,\cdot)$,
we also have
\begin{equation}\label{inc_coeffs_bound}
\mC_{\jph}^n, \mD_{\jmh}^n \le \lambda L_g L_{\beta} \le 1/2. 
\end{equation}
Thus $\mC_{\jph}^n+ \mD_{\jph}^n \le 1$, 
and we can apply Harten's lemma \cite[Theorem 6.1]{leveque_book}, which yields
\begin{equation}\label{beta_tvd}
\sumj \abs{\beta_{j+1}^{n+1} - \beta_j^{n+1}} \le \sumj \abs{\beta_{j+1}^{n} - \beta_j^{n}},
\end{equation}
and verifies the claim that the scheme is TVD with respect to $\{ \beta_j^n\}$.
It follows from \eqref{beta_tvd} that 
\begin{equation}\label{beta_tv_bound}
\sumj \abs{\beta_{j+1}^{n} - \beta_j^{n}} \le \sumj \abs{\beta_{j+1}^{0} - \beta_j^{0}}.
\end{equation}
We estimate the terms in the sum on the right side of \eqref{beta_tv_bound}:
\begin{equation}\label{u0_diffs}
\begin{split}
\abs{\beta_{j+1}^{0} - \beta_j^{0}} 
&= \abs{\beta(u_{j+1}^0,x_{j+1}) - \beta(u_{j}^0,x_{j})}\\
&\le \abs{\beta(u_{j+1}^0,x_{j}) - \beta(u_{j}^0,x_{j})} + \abs{\beta(u_{j+1}^0,x_{j+1}) - \beta(u_{j+1}^0,x_{j})}\\
&\le \mK_3(x_j) \abs{u_{j+1}^0 - u_j^0} + \mK_4(u_{j+1}^0) \abs{\alpha(x_{j+1})-\alpha(x_j)}.
\end{split}
\end{equation}
In light of \eqref{u0_diffs}, along with the assumption that $u_0$ is compactly supported, $u_0 \in \BV(\R)$, 
and Assumption  \descref{C5}{C-5},
we have 
for some $\mK_5>0$ independent of the mesh size $\D$,
	\begin{equation}\label{bv_beta_0}
	\sumj \abs{\beta_{j+1}^{0} - \beta_j^{0}}  \le \mK_5.
	\end{equation}
Finally, in view of \eqref{beta_tv_bound} and \eqref{bv_beta_0} we obtain the  spatial variation bound \eqref{bv_beta_n}.
\end{proof}

\begin{remark}\normalfont  If $A(x,u)$ is unimodal, then \eqref{C1} is satisfied with $g(u)=\abs{u}$ and $\beta(x,u)=\Psi_A(x,u).$ In \cite{GJT_2019}, convergence was established by showing that  $\Psi_A(\cdot,u^{\Delta x}(\cdot,t))$ is TVB. The above lemma is stronger in the sense that,  it implies $\Psi_A(\cdot,u^{\Delta x}(\cdot,t))$ is not only TVB but also in addition TVD.
\end{remark}

\begin{lemma}\label{lemma:TV_u}
For some $\Delta$-independent constant $\mK_6>0$,
\begin{equation}\label{bv_u_n}
\sumj \abs{u_{j+1}^{n} - u_j^{n}} \le \mK_6.
\end{equation}
\end{lemma}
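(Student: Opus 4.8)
The plan is to transfer the $\beta$-variable TVD bound from Lemma~\ref{lemma_godunov_tvd} back to the $u$-variable. The essential tool is the one-sided lower bound of Assumption~\descref{C6}{C-6}, namely $\abs{\beta(x,u)-\beta(x,v)} \ge \mK_2\abs{u-v}$, which lets me dominate a $u$-difference by a $\beta$-difference \emph{taken at a common spatial node}. Applying it with $x=x_{j+1}$, $u = u_{j+1}^n$, $v=u_j^n$ gives
\begin{equation*}
\abs{u_{j+1}^n - u_j^n} \le \frac{1}{\mK_2}\abs{\beta(x_{j+1},u_{j+1}^n) - \beta(x_{j+1},u_j^n)}.
\end{equation*}

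The first argument here is exactly $\beta_{j+1}^n$, but the second, $\beta(x_{j+1},u_j^n)$, is evaluated at the ``wrong'' node and must be reconciled with $\beta_j^n = \beta(x_j,u_j^n)$. I would do this via the triangle inequality,
\begin{equation*}
\abs{\beta(x_{j+1},u_{j+1}^n) - \beta(x_{j+1},u_j^n)} \le \abs{\beta_{j+1}^n - \beta_j^n} + \abs{\beta(x_j,u_j^n) - \beta(x_{j+1},u_j^n)},
\end{equation*}
and then control the last term by the spatial Lipschitz estimate of Assumption~\descref{C5}{C-5}, namely $\abs{\beta(x_j,u_j^n)-\beta(x_{j+1},u_j^n)} \le \mK_4(u_j^n)\abs{\alpha(x_j)-\alpha(x_{j+1})}$. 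Since $\abs{u_j^n}\le\mM$ by Lemma~\ref{lemma:A_u_bound} and $\mK_4$ is continuous, $\mK_4(u_j^n)$ is bounded by the constant $\bar{\mK}_4 := \sup_{\abs{u}\le\mM}\mK_4(u) < \infty$.

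Summing the resulting pointwise inequality over $j\in\Z$ then yields
\begin{equation*}
\mK_2 \sumj \abs{u_{j+1}^n - u_j^n} \le \sumj\abs{\beta_{j+1}^n-\beta_j^n} + \bar{\mK}_4\sumj\abs{\alpha(x_{j+1})-\alpha(x_j)}.
\end{equation*}
The first sum on the right is at most $\mK_5$ by Lemma~\ref{lemma_godunov_tvd}, and the second is at most $\TV(\alpha)$ because $\alpha\in\BV(\R)$ (a sum of increments of $\alpha$ over the ordered grid never exceeds its total variation). Dividing by $\mK_2$ gives \eqref{bv_u_n} with $\mK_6 = (\mK_5 + \bar{\mK}_4\,\TV(\alpha))/\mK_2$, which is manifestly independent of $\D$.

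There is no serious analytic obstacle here: the heavy lifting, the $\D$-uniform TVD bound, is already done in Lemma~\ref{lemma_godunov_tvd}. The only point requiring care is that the one-sided lower bound \descref{C6}{C-6} forces the two $\beta$-evaluations to share a spatial node, so the mismatch between $\beta(x_{j+1},u_j^n)$ and $\beta_j^n$ cannot be ignored; it must be absorbed into the $\BV$ variation of the weight $\alpha$, which is exactly what the second estimate of \descref{C5}{C-5} supplies.
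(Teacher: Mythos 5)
Your proposal is correct and follows essentially the same route as the paper's proof: the paper also combines the lower bound of Assumption \descref{C6}{C-6} with a (reverse) triangle inequality to reduce $\abs{u_{j+1}^n-u_j^n}$ to $\abs{\beta_{j+1}^n-\beta_j^n}$ plus a spatial mismatch term controlled by \descref{C5}{C-5} and $\TV(\alpha)$, then sums and invokes Lemma~\ref{lemma_godunov_tvd}; the only cosmetic difference is that you anchor both $\beta$-evaluations at $x_{j+1}$ while the paper anchors them at $x_j$. Your explicit bounding of $\mK_4(u_j^n)$ via $\abs{u_j^n}\le\mM$ is a detail the paper leaves implicit, and is correct.
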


\begin{proof}
We employ the reverse triangle inequality to obtain
\begin{equation}\label{beta_diff}
\begin{split}
\abs{\beta_{j+1}^{n} - \beta_j^{n}}
& \ge \abs{\beta(x_j,u_{j+1}^n)- \beta(x_j,u_j^n)} - \abs{\beta(x_{j+1},u_{j+1}^n)- \beta(x_j,u_{j+1}^n)}\\
& \ge \mK_2 \abs{u_{j+1}^n-u_j^n} - \abs{\beta(x_{j+1},u_{j+1}^n)- \beta(x_j,u_{j+1}^n)}.
\end{split}
\end{equation}
Here we have used Assumption  \descref{C6}{C-6}. From \eqref{beta_diff} we have
\begin{equation}
\abs{u_{j+1}^n-u_j^n} \le {1 \over \mK_2} \left(\abs{\beta_{j+1}^{n} - \beta_j^{n}} +
    \abs{\beta(x_{j+1},u_{j+1}^n)- \beta(x_j,u_{j+1}^n)}  \right).
\end{equation}
The proof is completed by summing over $j \in \Z$ and invoking \eqref{bv_beta_n}
and Assumption  \descref{C5}{C-5}.
\end{proof}

\begin{lemma}\label{godunov_time_cont}
We have the following discrete time continuity estimates:
\begin{eqnarray}\label{time_cont}
\sumj \abs{\beta_j^{n+1}-\beta_j^n} \le \mK_5,\\
\sumj \abs{u_j^{n+1}-u_j^n} \le \mK_7,
\end{eqnarray}
where $\mK_7>0$ is independent of the mesh size $\D$.
\end{lemma}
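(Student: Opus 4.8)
The plan is to prove the $\beta$-estimate first and then transfer it to the $u$-estimate using the coercivity of Assumption~\descref{C6}{C-6}. Both estimates follow from the $\beta$-form of the scheme together with bounds already assembled in Lemma~\ref{lemma_godunov_tvd}, so the argument is essentially a standard ``time continuity from spatial TVD'' computation.

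For the first inequality I would start from the $\beta$-form of the scheme, \eqref{beta_scheme}, namely $\beta_j^{n+1} = \beta_j^n - \lambda \theta_j^{\nph} \D_- \bar{g}_{\jph}^n$. Taking absolute values and using $0 \le \theta_j^{\nph} \le L_{\beta}$ from \eqref{theta_bound} gives $\abs{\beta_j^{n+1}-\beta_j^n} \le \lambda L_{\beta} \abs{\bar{g}_{\jph}^n - \bar{g}_{\jmh}^n}$. Since $L_g$ serves as a Lipschitz constant for both arguments of $\bar{g}(\cdot,\cdot)$ (as noted after \eqref{bound_beta_P}), I would bound $\abs{\bar{g}_{\jph}^n - \bar{g}_{\jmh}^n} \le L_g\left(\abs{\D_+\beta_j^n} + \abs{\D_-\beta_j^n}\right)$. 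Summing over $j \in \Z$, re-indexing one of the two resulting sums, and invoking the CFL condition \eqref{CFL_1} yields $\sumj \abs{\beta_j^{n+1}-\beta_j^n} \le 2\lambda L_g L_{\beta} \sumj \abs{\D_+\beta_j^n} \le \sumj \abs{\beta_{j+1}^n - \beta_j^n}$. The right-hand side is at most $\mK_5$ by the spatial variation bound \eqref{bv_beta_n}, which establishes the first estimate with the same constant $\mK_5$.

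For the second inequality the key observation is that $\beta_j^{n+1}$ and $\beta_j^n$ are evaluated at the \emph{same} spatial node $x_j$, i.e. $\beta_j^{n+1} = \beta(x_j,u_j^{n+1})$ and $\beta_j^n = \beta(x_j,u_j^n)$. Hence Assumption~\descref{C6}{C-6} applies directly, with no spatial-mismatch term, giving $\abs{\beta_j^{n+1}-\beta_j^n} \ge \mK_2 \abs{u_j^{n+1}-u_j^n}$, so that $\abs{u_j^{n+1}-u_j^n} \le \mK_2^{-1}\abs{\beta_j^{n+1}-\beta_j^n}$. Summing over $j$ and applying the first estimate gives $\sumj \abs{u_j^{n+1}-u_j^n} \le \mK_5/\mK_2 =: \mK_7$, which is $\D$-independent.

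There is no serious obstacle here. The one point that requires care is the standard step of absorbing the factor $2\lambda L_g L_{\beta}$ via the CFL condition \eqref{CFL_1}, so that the temporal increment of $\beta$ is controlled by its spatial variation $\sumj \abs{\D_+\beta_j^n}$, which is uniformly bounded by $\mK_5$. The passage from $\beta$ to $u$ is then immediate precisely because both time levels share the node $x_j$: only the coercivity \descref{C6}{C-6} is needed, and not the spatial modulus \descref{C5}{C-5} --- in contrast with the spatial estimate of Lemma~\ref{lemma:TV_u}, where the extra node-mismatch term did appear.
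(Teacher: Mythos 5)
Your proposal is correct and follows essentially the same route as the paper: the paper obtains the $\beta$-estimate by summing the incremental form \eqref{beta_incremental} and using the coefficient bounds $\mC_{\jph}^n,\mD_{\jmh}^n\le 1/2$ from \eqref{inc_coeffs_bound} together with \eqref{bv_beta_n}, which is exactly the Lipschitz-plus-CFL computation you carry out by hand starting from \eqref{beta_scheme}. The passage to the $u$-estimate via \eqref{beta_1} at the common node $x_j$, yielding $\mK_7=\mK_5/\mK_2$, is identical to the paper's.
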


\begin{proof}
From \eqref{beta_incremental} and \eqref{inc_coeffs_bound}, we have
\begin{equation}
\begin{split}
\sumj \abs{\beta_j^{n+1} - \beta_j^n} 
&\le \sumj \left(\mC_{\jph}^n\abs{\D_+ \beta_j^n} + \mD_{\jmh}^n\abs{\D_- \beta_j^n}\right)\\
&\le \sumj \left({1 \over 2} \abs{\D_+ \beta_j^n} + {1 \over 2} \abs{\D_- \beta_j^n}\right)
= \sumj \abs{\D_+ \beta_j^n} \le\mK_5 .
\end{split}
\end{equation}
Here we have used \eqref{bv_beta_n} to obtain the last inequality above.
Using the first inequality of \eqref{time_cont}, along with \eqref{beta_1}, we find that
\begin{equation}
\sumj \abs{u_j^{n+1} - u_j^n} \le  \mK_5 / \mK_2=:\mK_7.
\end{equation}
\end{proof}

\begin{theorem}\label{theorem2}
	Assume that the flux function $A(x,u)$ satisfies   Assumptions \descref{C1}{C-1} through \descref{C6}{C-6}, 
	and that $u_0\in L^{\infty}(\R)$. 
	Then as the mesh size $\D \rightarrow 0$, the approximations $u^{\D}$ generated by the Godunov scheme described 
	above converge in $L^1_{\loc}(Q)$ and pointwise a.e. in $Q$ to the unique adapted entropy solution 
	$u \in L^{\infty}(Q) \cap C([0,T]:L^1_{\loc}(\R))$ corresponding to the Cauchy problem \eqref{eq:discont}, 
	\eqref{eq:data} with initial data $u_0$. In addition, the total variation $u(\cdot,t)$ is uniformly  bounded for $t\geq 0$.
\end{theorem}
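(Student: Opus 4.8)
The plan is to first prove the theorem for the restricted class of initial data introduced above — $u_0 \in \BV(\R)$ with $u_0 - u_M$ compactly supported — and then to remove this restriction by approximation, exactly as in Section~\ref{sec_godunov}. For the restricted data the essential gain over Section~\ref{sec_godunov} is that we now possess a genuine spatial $\BV$ bound rather than merely $\BV_{\loc}$ compactness. Concretely, Lemma~\ref{lemma:TV_u} gives $\sumj \abs{u_{j+1}^n - u_j^n} \le \mK_6$ uniformly in $n$ and $\D$, Lemma~\ref{godunov_time_cont} gives the time-continuity estimate $\sumj \abs{u_j^{n+1}-u_j^n} \le \mK_7$, and Lemma~\ref{lemma:A_u_bound} gives the uniform bound $\abs{u_j^n} \le \mM$. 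First I would invoke a standard Kolmogorov--Riesz compactness argument (Helly's theorem in space, combined with the time-continuity estimate) to extract a subsequence with $u^{\D} \to u$ in $L^1_{\loc}(Q)$ and pointwise a.e., with $u \in L^{\infty}(Q) \cap C([0,T]:L^1_{\loc}(\R))$. This replaces the role played by Lemma~\ref{lemma_compactness} in Section~\ref{sec_godunov}, and crucially does not require the piecewise-constant assumption \descref{B1}{B-1} nor \descref{B3}{B-3}.

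Next I would identify the limit as the adapted entropy solution. By Lemma~\ref{lemma_Asmptns_A}, Assumptions \descref{A1}{A-1} through \descref{A4}{A-4} and \descref{B2}{B-2} all hold, so under the CFL condition \eqref{CFL_1} (which implies \eqref{CFL}) the scheme is monotone (Lemma~\ref{lemma_A_scheme_monotone}) and the discrete adapted entropy inequalities of Lemma~\ref{entropy_lemma1_A} are available. Since the convergence lemmas for the stationary profiles and the coefficient field, Lemma~\ref{lemma_k_converge} and Lemma~\ref{lemma_a_converge}, depend only on \descref{B2}{B-2} and not on \descref{B1}{B-1} or \descref{B3}{B-3}, the limit-passage argument of Lemma~\ref{lemma_lim_kruzkov} applies verbatim, with the compactness from the previous paragraph replacing Lemma~\ref{lemma_compactness}. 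This is the point that makes the present section work without the piecewise-constant hypothesis: the $\BV$ bound supplies compactness directly, while the entropy passage is insensitive to how that compactness was obtained. It follows that $u$ satisfies \eqref{ineq:adapted} of Definition~\ref{def_adapted_entropy}. By the uniqueness part of Theorem~\ref{theorem1}, the adapted entropy solution with data $u_0$ is unique; hence every subsequence of $\{u^{\D}\}$ has a further subsequence converging to this one limit, so the full family converges, proving the convergence claim for the restricted data.

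For the total variation bound I would pass the discrete estimate of Lemma~\ref{lemma:TV_u} to the limit: since $u^{\D}(\cdot,t) \to u(\cdot,t)$ in $L^1_{\loc}(\R)$ and total variation is lower semicontinuous under $L^1_{\loc}$ convergence, we obtain $\TV(u(\cdot,t)) \le \mK_6$ for a.e.\ $t$, and then for every $t \ge 0$ by the continuity of $u$ in $C([0,T]:L^1_{\loc}(\R))$. Because $\mK_6$ is independent of $\D$ and $t$, this yields the asserted uniform-in-time $\BV$ bound.

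Finally, to reach general $u_0 \in L^{\infty}(\R)$ I would approximate $u_0$ by functions $u_0^{\ve} \in \BV(\R)$ that agree with $u_M$ outside compact sets and satisfy $u_0^{\ve} \to u_0$ in $L^1_{\loc}(\R)$, apply the restricted result to each $u_0^{\ve}$, and let $\ve \to 0$ using the $L^1$-stability estimate of Theorem~\ref{theorem1}. The one genuinely delicate point — the main obstacle — is this extension: one must interchange the mesh limit $\D \to 0$ with the data limit $\ve \to 0$ for the \emph{scheme} output, not merely for the exact solutions. I would handle it as in Section~\ref{sec_godunov}, combining the $L^1$ contractivity of the monotone scheme (a discrete analogue of Theorem~\ref{theorem1}, uniform in $\D$) that compares $u^{\D}$ with the approximate scheme output, the already-established convergence of the approximate scheme output as $\D \to 0$, and the stability estimate, via a standard double-limit ($\ve \to 0$, $\D \to 0$) argument. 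I would note that the uniform $\BV$ bound is genuinely lost in this last step, since $\mK_6$ depends on $\TV(u_0)$; this is why the total-variation conclusion is meaningful precisely for the $\BV$ data treated in the earlier paragraphs.
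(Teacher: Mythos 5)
Your proposal is correct and follows essentially the same route as the paper's proof: compactness from the $L^\infty$ bound, the spatial $\BV$ bound of Lemma~\ref{lemma:TV_u}, and the time-continuity estimate of Lemma~\ref{godunov_time_cont}; identification of the limit via Lemmas~\ref{entropy_lemma1_A}, \ref{lemma_a_converge}, \ref{lemma_k_converge}, \ref{lemma_lim_kruzkov}; full-sequence convergence from the uniqueness Theorem~\ref{theorem1}; and the $\TV$ bound by lower semicontinuity of total variation under $L^1_{\loc}$ convergence. Your added remark that the uniform $\TV$ constant $\mK_6$ depends on $\TV(u_0)$ and is therefore only meaningful for the $\BV$ data handled before the $L^\infty$ extension is a fair and accurate caveat that the paper leaves implicit.
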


\begin{proof}
From the $L^{\infty}$ bound (Lemma~\ref{lemma:A_u_bound}), 
the spatial variation bound on $\{u_j^n \}$ (Lemma~\ref{lemma:TV_u}), and the time continuity estimate
(Lemma~\ref{godunov_time_cont}), we have convergence of the
approximations $u^{\D}$ along a subsequence in $L^1_{\textrm{loc}}(Q)$ and boundedly a.e. to 
some $u \in L^{\infty}(Q) \cap C([0,T]:L^1_{\loc}(\R))$.

Lemmas~\ref{entropy_lemma1_A}, \ref{lemma_a_converge}, \ref{lemma_k_converge}, \ref{lemma_lim_kruzkov} 
are valid with the setup of this section, from which we conclude that
the subsequential limit $u$  satisfies 
the adapted entropy inequalities 
of Definition~\ref{def_adapted_entropy}.

Since Assumptions \descref{A1}{A-1} through \descref{A3}{A-3}
are satisfied, we can invoke Theorem~\ref{theorem1}, thus concluding that
the entire sequence $u^{\D}$ (not just a subsequence) converges to the unique entropy solution.

By Lemma~\ref{lemma:TV_u},
we have a spatial variation bound on $u^{\D}(\cdot,t)$ which is independent of the mesh size, i.e.,
	for some $\mK_6>0$ independent of the mesh size $\D$,
	\begin{equation}\label{u_bv_bound}
	\TV(u^{\D}(\cdot,t))  \le \mK_6.
	\end{equation}
	Since $\TV(u(\cdot,t)) \le \liminf \TV(u^{\D}(\cdot,t))$, we also have $\TV(u(\cdot,t)) \le \mK_6.$
\end{proof}

The following examples  illustrates the applications and optimality of the conditions assumed in the Lemma \eqref{lemma_godunov_tvd}.
\begin{example}\normalfont
Consider the flux $A(x,u) = H(-x)u^2+H(x)\abs{u}$ which is of the form $A(x,u)=g(\beta(x,u)),$ where $g(u)=\abs{u}$ and $\beta(x,u)=H(-x)[u^2H(u)-u^2H(-u)]+H(x)u$ which violates Assumption \descref{C6}{C-6} .   There exists $u_0 \in \BV(\R)$ as constructed in \cite{ADGG} such that total variation of the solution in fact blows up in finite time. 
\end{example}
\begin{example}\normalfont
 Suppose $A(x,u)=(u-r(x))^2,$ with $r\in \BV(\R).$    Clearly $A(x, \cdot)$ is uniformly convex for all $x\in \R.$ Theorem 4.2 in \cite{GTV_2020} guarantees the existence of BV bound for initial datum in certain sub-class of BV functions whereas Theorem \ref{theorem2} implies that solutions are of bounded variation whenever $u_0 \in \BV(\R)$.
 \end{example}
 \begin{example}\normalfont
 $A(x,u)=H(-x)g(2u+sin(u)) +H(x)g(2u+cos(u)),$ where\begin{equation}g(u) = 
\begin{cases}
-u-1, \quad & u <-1,\\
0, \quad & u \in (-1,0)\\
u, \quad & u>1,\\
\end{cases} 
\end{equation}
The results in \cite{ADGG, GTV_2020, Ghoshal-JDE, NHM} are not applicable. On the other hand, Theorem \ref{theorem2} indeed assures the existence of total variation bounds on the solutions corresponding to the initial data $u_0\in \BV$.  
 \end{example}

\section{Numerical Simulations}\label{sec_numerical}

This section displays the  performance of the numerical scheme. We consider two examples. In each example the  flux has infinitely many discontinuities with an accumulation point.  Assumptions \eqref{C1}-\eqref{C6} are satisfied for each flux. Numerical experiments are performed on the space interval $[0,6]$ with $M=100, 400$ and $ 800$ spatial grid points. At any given time $t$ such that for $n\in \mathbb{N},$ $n\Delta t \in [t- \Delta t,t), $ let
\[e_{\Delta x}=\Delta x\sum_i\abs{u_i^n-u(x_i,t)}\]
and
\[\TV(\beta(\cdot,u^{\Delta x}(\cdot,t)))=\sum_i\abs{\beta(x_{i},u_i^n)-\beta(x_{i-1},u_{i-1}^n)}, \TV(u^{\Delta x}(\cdot,t))=\sum_i\abs{u_i^n-u_{i-1}^n},\]
denote the approximate $L^1$ error of numerical approximation $u^{\Delta x}(\cdot,t)$ with respect to the exact solution $u(\cdot,t)$,  the total variation of $u^{\Delta x}(\cdot,t)$ and  the total variation of $\beta(\cdot,u^{\Delta x}(\cdot,t))$ respectively.
\begin{example}\label{num_ex_1} \normalfont 
This example demonstrates that the scheme captures solutions containing both rarefactions and shocks efficiently. We consider the flux of the of the form $A(x,u)=g(\beta(x,u))$ with $g(u)=u^2/2, \beta(x,u)=u+r(x)$. Here $r$ is a function of bounded variation and is chosen such that the resulting solution is piecewise linear with infinitely many discontinuities. In fact, the solution is a combination of  infinitely many shocks and rarefactions.
Let $p=4$ and $q=0.8$. Define $C_n=[a_n,a_{n+1}]$ for $n\in \mathbb{N}$ with
\begin{eqnarray}
a_1=1 \text{ and }a_n=1+\sum_{i=1}^{n-1} \tilde{a}_i \text{ for }n\geq 2
\end{eqnarray}
such that for each $n\in \mathbb{N}$,
\begin{equation*}
\tilde{a}_n= 
\begin{cases}
pq^{n-1}-pq^n, \quad & \text{ if } n \text{ is odd},\\
pq^{n-2}-pq^{n-1}, \quad & \text{ if } n \text{ is even}.
\end{cases}
\end{equation*}
Define 
\begin{equation*}
r(x)= 
\begin{cases}
p, \quad & x <1,\\
pq^{n-1}, \quad & x \in C_n,n\in \mathbb{N},\\
 0, \quad & x>a_{\infty},
\end{cases} 
\end{equation*} and consider a piecewise constant initial data 
\begin{equation}
u_0(x)= 
\begin{cases}
-pq, \quad & x <a_2,\\
-pq^n, \quad & x \in C_n \text{ and } n \text{ odd},\\
-pq^{n-2}, \quad & x \in C_n \text{ and } n \text{ even},\\
0, \quad & x>a_{\infty}.
\end{cases} 
\end{equation}At $t=1,$ the solution is a combination of rarefactions and stationary shocks along the spatial discontinuities of $A(\cdot,\cdot)$ and is given by, 
\begin{equation}\label{exact}
u(x,1)= 
\begin{cases}
-pq, \quad & x <a_2,\\
x-a_{n}-pq^{n-1}, \quad & x \in C_{n} \text{ and } n \text{ odd},\\
x-a_{n+1}-pq^{n-1}, \quad & x \in C_{n} \text{ and } n \text{ even},\\
0, \quad & x>a_{\infty}.
\end{cases} 
\end{equation}
Figure \ref{fig:ex4} plots the numerical solutions at the final time $t=1$ for various mesh sizes. It can be seen that the scheme captures both stationary shocks and rarefactions efficiently and that the difference between the approximation and the exact solution decreases as the mesh size reduces. 
\begin{figure}[H]
 \centering
 \includegraphics[width=\textwidth,keepaspectratio]{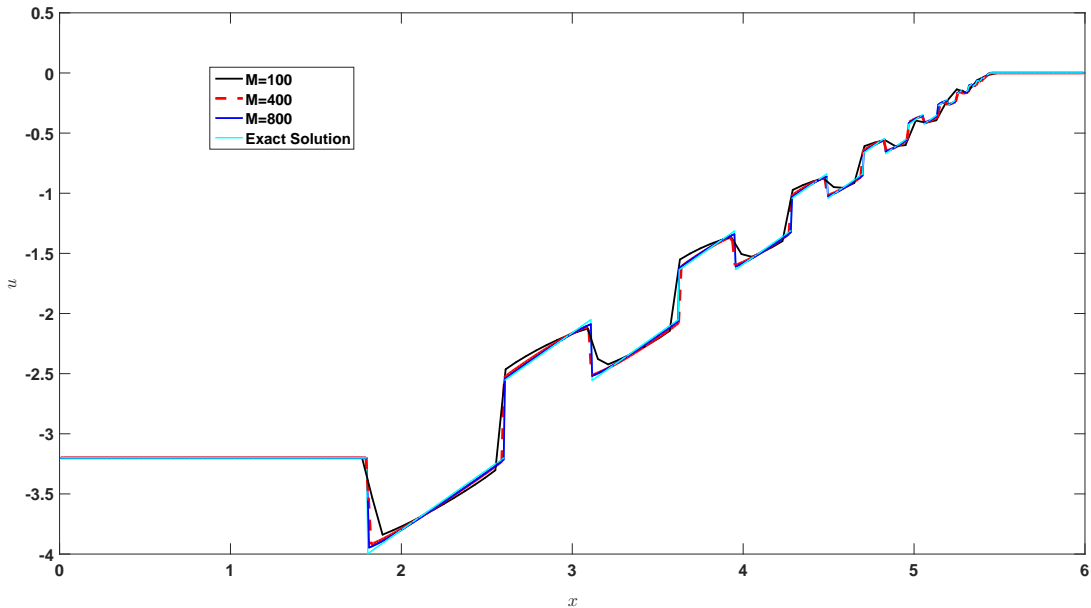}
   \caption{Example~\ref{num_ex_1}. The solution at $t=1$ contains both shocks and rarefactions. The approximation improves
 with decreasing mesh size.}
 \label{fig:ex4}
\end{figure}
 Table \ref{table:1} records $e_{\Delta x}$, $ \TV(u^{\Delta x}(\cdot,1))$, and $\TV(\beta(\cdot,u^{\Delta x}(\cdot,1)))$ for various mesh sizes 
 $\Delta x$. As indicated by Figure \ref{fig:ex4}, Table \ref{table:1}  also reflects that $e_{\Delta x}$ decreases with decreasing mesh size indicating the convergence of the scheme.
 \begin{table}[H]
     \centering
    \begin{tabular}{ |c|c|c|c|c|c| } 
     \hline
 M&  $e_{\Delta x}$ &$\TV(u^{\Delta x}(\cdot,1))$&$\TV(\beta(\cdot,u^{\Delta x}(\cdot,1))$\\\hline
 50& 0.2244 & 5.4883&5.6646\\\hline
 100&  0.1603 &5.7466& 5.9861\\\hline
 200&0.1047&6.2989 &6.6729\\\hline
 400&0.0673&6.5784& 6.9716\\\hline
 800& 0.0423&6.8433 &7.2655\\\hline
 1600&0.0258&7.0394 &7.4769\\\hline
     \end{tabular}
     \caption{ Approximate $L^1$ error and total variation at $t=1$ for Example \ref{num_ex_1}.}
   \label{table:1}
 \end{table}

 \end{example}
\begin{example}\label{num_ex_2}\normalfont 
For this example the flux is 
$A(x,u)=g(\beta(x,u))=g(u-r(x))$ where
\begin{equation}
g(u) = 
\begin{cases}
-u-1, \quad & u <-1,\\
0, \quad & u \in (-1,0),\\
u, \quad & u>1.\\
\end{cases} 
\end{equation}
The initial data is constant: $u_0(x) = 2$, and the resulting solution consists of constant segments separated by shocks.
We approximate the solution numerically up to the final time $t=6$ using the numerical scheme \eqref{scheme_A}. 
For every $n \in \mathbb{N},$ consider the sequence defined by,\begin{eqnarray}
a_n&=&1+0.8+0.8^2+...+0.8^n=5(1-0.8^n), \nonumber\\
r_n&=&1-(-0.8)^n.\nonumber
\end{eqnarray} Now, the function $r$ is defined as follows:
\begin{equation}
r(x)= 
\begin{cases}
2, \quad & x <1,\\
r_n \chi_{[a_n,a_{n+1}]}(x), \quad & x \in (1,5),\\
1, \quad & x>5.\\
\end{cases} 
\end{equation}

The flux considered here admits infinitely many spatial discontinuities with an accumulation point at $x=5.$ 
The initial data is constant but the solution develops discontinuities (immediately) for $t>0$, and the
solution at $t=6$ is given by
\begin{equation}
u(x,6)= r(x) \quad \text{ for } x\in [0,6].
\end{equation}
Note that $\beta(x,u(x,6))=u(x,6)-r(x)=0$ for $x\in [0,6],$
implying that $u(x,6)$ is a stationary solution of the conservation law.
\begin{figure}[H]
 \centering
 \includegraphics[width=\textwidth,keepaspectratio]{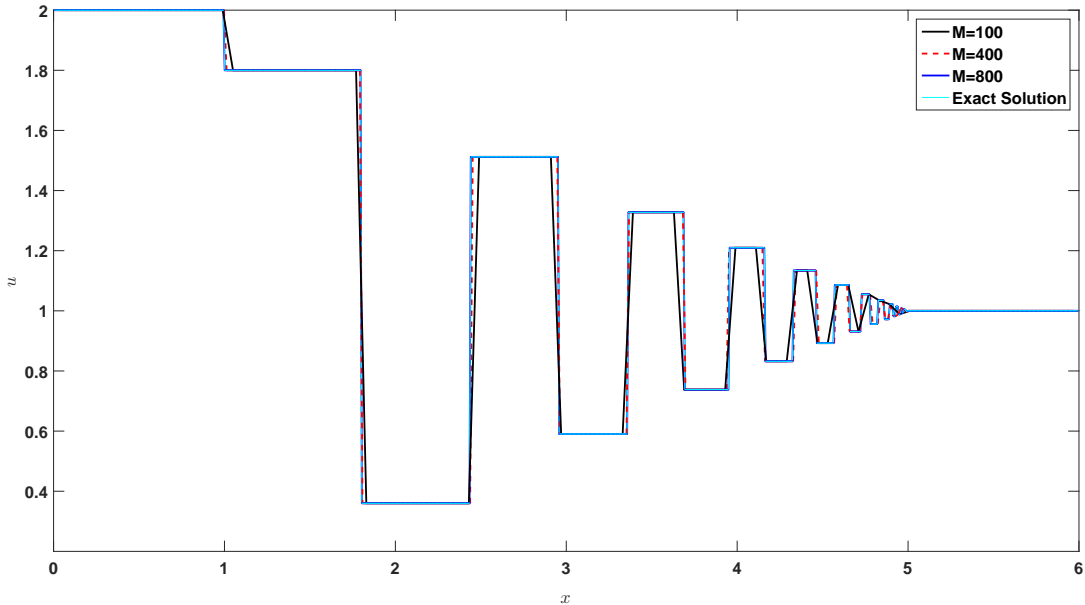}
 \caption{Example~\ref{num_ex_2}. The solution at $t=6$ contains infinitely many shocks along the spatial discontinuities of the flux, with an accumulation point at $x = 5.$ 
 The approximation improves with decreasing mesh size.}
\label{fol}
\end{figure}

Table \ref{table2} records $e_{\Delta x}$, $\TV(u^{\Delta x}(\cdot,6))$, and $\TV(\beta(\cdot,u^{\Delta x}(\cdot,6)))$ for various mesh sizes 
$\Delta x$. Figure \ref{fol} plots the numerical approximation for varying mesh sizes. Table \ref{table2} and Figure \ref{fol} show that the numerical scheme is able to capture the shocks efficiently and as the mesh size goes to zero, and that the computed solution matches well with the exact solution.  
The higher than normal rate of convergence displayed in Table \ref{table2} is due to the fact that $u(\cdot,6)$ is a stationary solution
of the conservation law. At early times, the convergence rate is more like what is seen in Table \ref{table:1}.
Additionally, in Table \ref{table3} we display the total variation at various times for a fixed mesh size, where it can be seen that $\TV(u(\cdot,t)$ remains bounded, but does not decrease, whereas $\TV(\beta(\cdot,u(\cdot,t)))$ in fact  decreases with increase in $t.$
  \begin{table}[H] 
     \centering
    \begin{tabular}{ |c|c|c|c|c|c| } 
     \hline
 M&  $e_{\Delta x}$& $\TV(u^{\Delta x}(\cdot,6))$&$\TV(\beta(\cdot,u^{\Delta x}(\cdot,6))$\\\hline
 50&4.65542e-03& 6.7835& 1.4533e-02\\\hline
 100& 4.2652e-04& 6.9856& 2.2975e-03\\\hline
 200&1.8578e-05& 7.2348 &  1.7906e-04\\\hline
 400&5.8576e-08&7.2804&   1.0639e-06\\\hline
 800& 2.0119e-10&7.3572 &7.3026e-11
\\\hline
 1600&1.2861e-12& 7.3807&   2.0350e-13\\\hline
     \end{tabular}
     \caption{ Approximate $L^1$ error and total variation at $t=6$ for Example \ref{num_ex_2}. The very
     rapid rate of convergence is due to the fact that $u(\cdot,6)$ is a stationary solution.}
  \label{table2}
 \end{table}

\begin{table}[H]
     \centering
    \begin{tabular}{ |c|c|c|c|c|c| } 
     \hline
 t&  $\TV(u^{\Delta x}(\cdot,t))$&$\TV(\beta(\cdot,u^{\Delta x}(\cdot,t))$\\\hline
 0&    0 & 7.2804\\\hline
  1&  12.1201 & 5.2839\\\hline
  2& 11.1309& 4.2867 \\\hline
  3&9.8179&2.9384 \\\hline
  4&8.6358& 1.4036\\\hline
  5&7.3744&0.0941 \\\hline
  6&   7.2804& 1.0639e-06\\\hline
 \end{tabular}
      \caption{Comparison of total variation for various time steps with M=400 spatial grid points, for Example \ref{num_ex_2}.
      $\TV(u^{\Delta x}(\cdot,t))$ increases between $t=0$ and $t=1$, while $\TV(\beta(\cdot,u^{\Delta x}(\cdot,t))$
      decreases over each time interval.}
      \label{table3}
    \end{table}
\end{example}
 
 \noindent\textbf{Acknowledgement.} SSG and GV would like to acknowledge the Department of Atomic Energy, Government of India, under project no. 12-R\&D-TFR-5.01-0520 for the support. SSG thanks the Inspire faculty-research grant DST/INSPIRE/04/2016/000237.
 
  \section{References}
 
 \end{document}